\newtheorem{theorem}{Theorem}[section]
\newtheorem{lemma}{Lemma}[section]
\theoremstyle{definition}
\newtheorem{definition}{Definition}[section]
\newtheorem{corollary}{Corollary}[section]
\newtheorem{example}[theorem]{Example}
\theoremstyle{remark}
\numberwithin{equation}{section}
\begin{document}
\title[Pointwise bi-slant submanifolds of Kenmotsu manifolds]{Warped product pointwise bi-slant submanifolds of Kenmotsu manifolds}
\author[S. K. Hui, J. Roy and T. Pal]{Shyamal Kumar Hui$^*$, Joydeb Roy and Tanumoy Pal}
\subjclass[2010]{53C15, 53C42.}
\keywords{Kenmotsu manifold, warped product, pointwise bi-slant submanifold.\\$*$ Corresponding author.}

\begin{abstract}
The present paper deals with the study of warped product pointwise bi-slant submanifolds of Kenmotsu manifolds with an example. The characterization for such submanifold is also discussed. An inequality of such submanifold is obtained and its equality case is also considered.
\end{abstract}
\maketitle
\section{Introduction}
In \cite{TANNO} Tanno classified connected almost contact metric manifolds whose automorphism groups possess the maximum dimension.
For such a manifold, the sectional curvature of plane sections containing $\xi$ is a constant, say $c$, which that they could
be divided into three classes:
(i) homogeneous normal contact Riemannian manifolds with $c > 0$,
(ii) global Riemannian products of a line or a circle with a K\"{a}hler manifold of constant holomorphic sectional curvature
          if $c=0$ and
(iii) a warped product space $\mathbb{R} \times _f \mathbb{C}^n$ if $c< 0$.
It is known that the manifolds of class (i) are characterized by admitting a Sasakian structure.
The manifolds of class (ii) are characterized by a tensorial relation admitting a cosymplectic structure. Kenmotsu \cite{KEN}
characterized the differential geometric properties of the manifolds of class (iii) which are nowadays called Kenmotsu
manifolds and later studied by several authors (see \cite{HUI1}, \cite{HUI2}, \cite{HUI3} and references therein).\\
\indent In 1990, Chen \cite{CHENS} introduce the notion of slant submanifolds in a Hermitian manifold. Then Lotta \cite{LOTTA} has defined and studied
slant immersions of a Riemannian manifold  into an almost contact metric manifold. As a natural generalization of slant
submanifolds Etayo \cite{ETAYO} defined the notion of pointwise slant subamnifolds under the name of quasi-slant submanifolds.
Pointwise slant submanifolds in almost contact metric manifold is studied in (\cite{MIUD},\cite{PARK}). On the other hand, Carriazo \cite{CAR1}
defined and studied bi-slant submanifolds in almost Hermitian manifolds and simultaneously gave the notion of pseudo-slant submanifolds. Then
Khan and Khan \cite{6} studied contact version of pseudo-slant submanifolds. Bi-slant submanifolds of Kenmotsu space form is studied in \cite{PGS}.\\
\indent As a generalization of Riemannian product manifold Bishop and O'Neill \cite{BISHOP} defined and studied warped product manifolds. The study of warped product submanifolds was initiated by Chen (\cite{CHENCR1}, \cite{CHENCR2}). Then many authors studied warped product submanifolds
of different ambient manifold (\cite{CHENBOOK}, \cite{HAN}-\cite{HUOM}, \cite{UD1}). In this connection it may be mention that warped product submanifolds of Kenmotsu manifold are studied in (\cite{ATCE1}, \cite{SOLAMY}, \cite{OTHMAN}, \cite{KHAN}, \cite{KHANS}, \cite{UDD2}, \cite{UDD1}, \cite{UKK}, \cite{UOO}).\\
\indent Recently, Chen and Uddin \cite{CHENUD} studied pointwise bi-slant submanifolds of Kaehler manifold. Also, Khan and Shuaib \cite{KHANS}
studied pointwise pseudo-slant submanifolds in Kenmotsu manifolds. Motivated by the above studies the present paper deals with the study of
warped product pointwise bi-slant submanifolds of Kenmotsu manifolds. The paper is organized as follows. Section 2 is concerned with some preliminaries.
Section 3 deals with the study of pointwise bi-slant submanifolds of Kenmotsu manifold. In section 4, we have studied warped product pointwise bi-slant submanifolds of Kenmotsu manifold
with an interesting example. In section 5, we have found the characterization for warped product pointwise bi-slant submanifolds of Kenmotsu manifolds. Section 6 is concerned with an inequality on warped product pointwise bi-slant submanifold of Kenmotsu manifold whose equality case is also considered.
\section{Preliminaries}
An odd dimensional smooth manifold $\bar{M}^{2m+1}$ is said to be an almost contact metric manifold \cite{BLAIR} if it admits a $(1,1)$
tensor field $\phi$, a vector field $\xi$, an $1$-form $\eta$ and a Riemannian metric $g$ which satisfy
\begin{equation}\label{2.1}
  \phi \xi=0,\ \ \ \eta(\phi X)=0, \ \ \ \phi^2 X=-X+\eta(X)\xi,
\end{equation}
\begin{equation}\label{2.2}
  g(\phi X,Y)=-g(X,\phi Y), \ \ \ \eta(X)=g(X,\xi), \ \ \ \eta(\xi)=1,
\end{equation}
\begin{equation}\label{2.3}
  g(\phi X,\phi Y)=g(X,Y)-\eta(X)\eta(Y)
\end{equation}
for all vector fields $X,Y$ on $\bar{M}$.\\
\indent An almost contact metric manifold $\bar{M}^{2m+1}(\phi, \xi, \eta, g)$ is said to be Kenmotsu manifold if the following conditions hold \cite{KEN}:
\begin{equation}\label{2.4}
  \bar{\nabla}_X \xi=X-\eta(X)\xi,
\end{equation}
\begin{equation}\label{2.5}
 (\bar{\nabla}_X \phi)(Y)=g(\phi X ,Y)\xi-\eta(Y)\phi X,
\end{equation}
where $\bar{\nabla}$ denotes the Riemannian connection of $g$.\\
\indent Let $M$ be an $n$-dimensional submanifold of a Kenmotsu manifold $\bar{M}$. Throughout the paper we assume that
the submanifold $M$ of $\bar{M}$ is tangent to the structure vector field $\xi$.\\
\indent Let $\nabla$ and $\nabla ^\bot$ be the induced connections on the tangent bundle $TM$ and the normal bundle $T^\bot M$ of $M$ respectively.
Then the Gauss and Weingarten formulae are given by
\begin{equation}\label{2.6}
  \bar{\nabla}_X Y=\nabla_X Y+h(X,Y)
\end{equation}
and
\begin{equation}\label{2.7}
  \bar{\nabla}_XV=-A_V X+\nabla _X^ {\bot}V
\end{equation}
for all $X,Y\in \Gamma(TM)$ and $V\in \Gamma(T^\bot M)$, where $h$ and $A_V$ are second fundamental form and the shape operator
(corresponding to the normal vector field $V$) respectively for the immersion of $M$ into $\bar{M}$. The second fundamental form
$h$ and the shape operator $A_V$ are related by $g(h(X,Y),V)=g(A_V X,Y)$
for any $X,Y\in \Gamma(TM)$ and $V\in \Gamma(T^\bot M)$, where g is the Riemannian metric on $\bar{M}$ as well as on $M$.

The mean curvature $H$ of $M$ is given by $H=\frac{1}{n}\text{trace}\ h$. A submanifold of a Kenmotsu manifold $\bar{M}$ is said to be totally umbilical if $h(X,Y)=g(X,Y)H$ for any $X,Y\in \Gamma(TM)$. If $h(X,Y)=0$ for all $X,Y\in \Gamma(TM)$, then $M$ is totally geodesic and if $H=0$ then $M$ is minimal in $\bar{M}$.\\

Let $\{e_1,\cdots,e_n\}$ be an orthonormal basis of the tangent bundle $TM$ and $\{e_{n+1},\cdots,e_{2m+1}\}$ an orthonormal basis of the normal bundle
 $T^\bot M$. We put
 \begin{equation}\label{2.8a}
 h_{ij}^r=g(h(e_i,e_j),e_r)\ \text{and} \ \ \|h\|^2=g(h(e_i,e_j),h(e_i,e_j)),
 \end{equation}
 for $r\in\{n+1,\cdots,2m+1\}$.\\
 \indent For a differentiable function $f$ on $M$, the gradient $\boldsymbol{\nabla}f$ is defined by
 \begin{equation}\label{2.8b}
 g(\boldsymbol{\nabla}f,X)=Xf,
 \end{equation}
 for any $X\in\Gamma(TM)$. As a consequence, we get
 \begin{equation}\label{2.8c}
 \|\boldsymbol{\nabla} f\|^2=\sum_{i=1}^{n}(e_i(f))^2.
 \end{equation}
\indent For any $X\in \Gamma(TM)$ and $V\in \Gamma(T^\bot M)$, we can write
\begin{eqnarray}
\label{2.8}
\text{(a)} \  \phi X = PX+QX,\ \  \text{(b)}\  \phi V= bV+cV
\end{eqnarray}
where $P X,\ bV$ are the tangential components and $QX,\ cV$ are the normal components.\\
A submanifold $M$ of an almost contact metric manifold $\bar{M}$ is said to be invariant if $\phi(T_pM)\subseteq T_pM$, for every $p\in M$ and
anti-invariant if $\phi T_pM\subseteq T^\bot_pM$, for every $p\in M$ \cite{BEJ}.\\
A submanifold $M$ of an almost contact metric manifold $\bar{M}$ is said to be slant if for each non-zero vector $X\in T_pM$, the angle $\theta$ between $\phi X$
and $T_pM$ is a constant, i.e. it does not depend on the choice of $p\in M$.
\begin{definition}\cite{MIUD}
A submanifold $M$ of an almost contact metric manifold $\bar{M}$ is said to be pointwise slant if for any non-zero vector $X\in T_pM$ at $p\in M$,
such that $X$ is not proportional to $\xi_p$, the angle $\theta(X)$ between $\phi X$ and $T_p^*M=T_pM-\{0\}$ is independent of the choice of non-zero
$X\in T_p^*M$.
\end{definition}
For pointwise slant submanifold $\theta$ is a function on $M$, which is known as slant function of $M$. Invariant and anti-invariant submanifolds are particular cases of pointwise slant submanifolds with slant function $\theta=0$ and $ \frac{\pi}{2}$ respectively. Also a pointwise slant submanifold $M$ will be slant if and only if $\theta$ is constant on $M$, Thus a pointwise slant submanifold is proper if neither $\theta=0,\frac{\pi}{2}$ nor constant.
\begin{theorem}\cite{MIUD}
Let $M$ be a submanifold of an almost contact metric manifold $\bar{M}$ such that $\xi\in \Gamma(TM)$. Then, $M$ is pointwise slant if and
 only if
 \begin{equation}\label{2.9}
 P^2=\cos^2\theta(-I+\eta\otimes\xi),
 \end{equation}
 for some real valued function $\theta$ defined on the tangent bundle $TM$ of $M$.
\end{theorem}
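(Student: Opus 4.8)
The plan is to compute the slant angle explicitly as a quotient of norms, derive a pointwise (quadratic-form) identity, and then upgrade it to an operator identity using self-adjointness. First I would fix $p\in M$ and a non-zero $X\in T_pM$ with $X$ not proportional to $\xi_p$. Since, by \eqref{2.8}, $QX$ is the normal component of $\phi X$ while $PX$ is its tangential component, $PX$ is exactly the orthogonal projection of $\phi X$ onto $T_pM$. Hence the angle $\theta(X)$ between $\phi X$ and $T_pM$ satisfies $\cos\theta(X)=\|PX\|/\|\phi X\|$, where $\phi X\neq0$ because $X$ is not proportional to $\xi$ (by \eqref{2.3}).

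Next I would evaluate the two norms. For the denominator, \eqref{2.3} gives $\|\phi X\|^2=\|X\|^2-\eta(X)^2$. For the numerator, writing $\phi X=PX+QX$ and using that $QX\perp PX$ gives $\|PX\|^2=g(\phi X,PX)$; then \eqref{2.2} yields $g(\phi X,PX)=-g(X,\phi PX)=-g(X,P^2X)$, since the normal part $QPX$ is orthogonal to the tangential $X$. Combining these,
\[
-g(P^2X,X)=\cos^2\theta(X)\,\bigl(\|X\|^2-\eta(X)^2\bigr).
\]
Observing that $\|X\|^2-\eta(X)^2=-g\bigl((-I+\eta\otimes\xi)X,X\bigr)$, this rearranges to $g\bigl((P^2-\cos^2\theta(X)(-I+\eta\otimes\xi))X,X\bigr)=0$ for every such $X$, and it holds trivially when $X\propto\xi_p$ (then both $PX$ and $(-I+\eta\otimes\xi)X$ vanish).

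The key step is to pass from this scalar identity to the operator identity \eqref{2.9}, and this is where I expect the main obstacle to lie. I would first record that $P$ is skew-symmetric on $TM$: for tangential $X,Y$, $g(PX,Y)=g(\phi X,Y)=-g(X,\phi Y)=-g(X,PY)$ by \eqref{2.2}. Consequently $P^2$ is self-adjoint (both $g(P^2X,Y)$ and $g(X,P^2Y)$ reduce to $-g(PX,PY)$), and $-I+\eta\otimes\xi$ is visibly self-adjoint. Thus the operator $A=P^2-\cos^2\theta\,(-I+\eta\otimes\xi)$ is self-adjoint with $g(AX,X)=0$ for all $X$, and polarization forces $A=0$. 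This argument is exactly what is needed: a priori $\theta(X)$ could depend on $X$, and it is the self-adjointness step that collapses it to a well-defined real valued function, yielding \eqref{2.9}.

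For the converse I would assume \eqref{2.9} holds for some function $\theta$ and reverse the computation: for any admissible $X$, $\|PX\|^2=-g(P^2X,X)=\cos^2\theta\,(\|X\|^2-\eta(X)^2)=\cos^2\theta\,\|\phi X\|^2$, so the angle $\alpha(X)$ between $\phi X$ and $T_pM$ obeys $\cos^2\alpha(X)=\|PX\|^2/\|\phi X\|^2=\cos^2\theta$, which is independent of the choice of $X$. Hence $M$ is pointwise slant with slant function $\theta$, which completes the equivalence.
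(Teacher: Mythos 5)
The paper itself gives no proof of this statement: Theorem~2.1 is quoted from \cite{MIUD} with only a citation, so there is no in-paper argument to compare against. Your proof is correct, and it is essentially the standard argument from the pointwise-slant literature (Chen's lemma adapted to the contact setting): identify $\cos\theta(X)=\|PX\|/\|\phi X\|$ since $PX$ is the orthogonal projection of $\phi X$ onto $T_pM$; compute $\|\phi X\|^2=\|X\|^2-\eta(X)^2$ from \eqref{2.3} and $\|PX\|^2=-g(P^2X,X)$ from \eqref{2.2} and \eqref{2.8}, noting $QPX\perp X$; observe skew-symmetry of $P$, hence self-adjointness of $P^2$ and of $-I+\eta\otimes\xi$; and polarize the vanishing quadratic form to get the operator identity \eqref{2.9}, reversing the computation for the converse. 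Your handling of the degenerate cases ($X\propto\xi_p$, where both sides annihilate $X$, and $PX=0$, where the angle is $\pi/2$) is also in order.

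One remark does need correcting, though it does not invalidate the proof. You write that ``a priori $\theta(X)$ could depend on $X$, and it is the self-adjointness step that collapses it to a well-defined real valued function.'' This inverts the logic. The independence of $\theta(X)$ from $X$ at each point is precisely the pointwise-slant \emph{hypothesis} of the forward implication; the identity $-g(P^2X,X)=\cos^2\theta(X)\,\|\phi X\|^2$ holds on any submanifold tangent to $\xi$, with a coefficient that in general genuinely depends on $X$, and in that case no single operator $A=P^2-\cos^2\theta\,(-I+\eta\otimes\xi)$ can even be formed, so polarization cannot be run --- indeed \eqref{2.9} fails for submanifolds that are not pointwise slant, as your own converse demonstrates. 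What self-adjointness plus polarization buy is only the upgrade from the scalar identity, with the coefficient already constant in $X$ by hypothesis, to the tensorial identity \eqref{2.9}. Since in forming $A$ you do fix $\cos^2\theta$ at each point (i.e., you implicitly invoke the hypothesis), the proof as written is sound; simply delete or restate that closing remark.
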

If $M$ be a pointwise slant submanifold of an almost contact metric manifold $\bar{M}$ the following relations holds:
\begin{eqnarray}
\label{2.10}  g(PX,PY) &=& \cos^2\theta\{g(X,Y)-\eta(X)\eta(Y)\} \\
\label{2.11}  g(QX,QY) &=& \sin^2\theta\{g(X,Y)-\eta(X)\eta(Y)\}\\
\label{2.12}  bQX&=&\sin^2\theta\{-X+\eta(X)\xi\}, \ \ \ cQX=-QPX
\end{eqnarray}
for any $X,\ Y\in\Gamma(TM)$.\\
We now recall the following:
\begin{theorem}\cite{HIPKO} [Hiepko's Theorem].
Let $\mathcal{D}_1$ and $\mathcal{D}_2$ be two orthogonal distribution on a Riemannian manifold $M$. Suppose that $\mathcal{D}_1$ and $\mathcal{D}_2$ both are involutive such that $\mathcal{D}_1$ is a totally geodesic foliation and $\mathcal{D}_2$ is a spherical foliation. Then $M$ is locally isometric to a non-trivial warped product $M_1\times_fM_2$, where $M_1$ and $M_2$ are integral manifolds of $\mathcal{D}_1$ and $\mathcal{D}_2$, respectively.
\end{theorem}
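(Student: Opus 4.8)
Since the statement is Hiepko's classical characterization of warped products, the plan is to reconstruct the warped metric directly from the geometric hypotheses on the two foliations. First I would fix a point $p\in M$ and, using that $\mathcal{D}_1$ and $\mathcal{D}_2$ are both involutive with $TM=\mathcal{D}_1\oplus\mathcal{D}_2$, invoke the Frobenius theorem to obtain local coordinates $(x^a,y^\alpha)$ about $p$ for which $\{\partial/\partial x^a\}$ spans $\mathcal{D}_1$ and $\{\partial/\partial y^\alpha\}$ spans $\mathcal{D}_2$. This realizes a neighbourhood of $p$ as a product $M_1\times M_2$, where $M_1$ is an integral manifold of $\mathcal{D}_1$ and $M_2$ is an integral manifold of $\mathcal{D}_2$; since the two distributions are orthogonal, the metric is block diagonal, $g=g_{ab}\,dx^a dx^b+g_{\alpha\beta}\,dy^\alpha dy^\beta$. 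Because the coordinate vector fields commute, the whole argument reduces to controlling how each block depends on the opposite set of variables.

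Next I would show that the $\mathcal{D}_1$-block is independent of the $M_2$-coordinates. For coordinate fields $X,Y\in\mathcal{D}_1$ and $U\in\mathcal{D}_2$ (so $[U,X]=0$, hence $\nabla_U X=\nabla_X U$) I compute $U\,g(X,Y)=g(\nabla_U X,Y)+g(X,\nabla_U Y)$; using orthogonality $g(U,Y)=0$ together with the totally geodesic hypothesis $\nabla_X Y\in\mathcal{D}_1$, each term vanishes, so $U\,g(X,Y)=0$ and $g_{ab}=g_{ab}(x)$ descends to a metric $g_1$ on $M_1$. For the $\mathcal{D}_2$-block I would run the analogous computation with $U,V\in\mathcal{D}_2$ and $X\in\mathcal{D}_1$. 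Writing the umbilic second fundamental form of the $\mathcal{D}_2$-leaves as $(\nabla_U V)^{\mathcal{D}_1}=g(U,V)\,N$ with mean curvature vector $N\in\mathcal{D}_1$, the same manipulation gives $X\,g(U,V)=-2\,g(U,V)\,g(X,N)$.

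Finally, the crucial step is to integrate this relation, and here the spherical hypothesis (as opposed to mere umbilicity) enters. Sphericity says that $N$ is parallel in the normal bundle of the leaves, which amounts to the statement that the one-form dual to $N$ is closed along $\mathcal{D}_1$ and basic with respect to $\mathcal{D}_2$; hence locally $g(X,N)=-X(\ln f)$ for a positive function $f$ constant on the leaves of $\mathcal{D}_2$, i.e. $f=f(x)$ is a function on $M_1$. Substituting yields $X\,g(U,V)=2\,X(\ln f)\,g(U,V)$, which integrates to $g_{\alpha\beta}=f^2\,\hat g_{\alpha\beta}(y)$ with $\hat g_{\alpha\beta}$ depending only on the $M_2$-coordinates. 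Collecting the two blocks, $g=g_1+f^2 g_2$ is precisely the warped product metric on $M_1\times_f M_2$, and the product is non-trivial because $\mathcal{D}_2$ is not totally geodesic, forcing $f$ to be non-constant. I expect the main obstacle to be exactly this last step: umbilicity of $\mathcal{D}_2$ alone delivers only a twisted product, in which the warping factor may depend on both sets of variables, and it is the parallelism of the mean curvature vector---the full force of the \emph{spherical} condition---that shows the factor is basic and thereby upgrades the twisted product to a genuine warped product.
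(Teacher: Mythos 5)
The paper offers no proof of this statement for you to be compared against: it is Hiepko's theorem, recalled from \cite{HIPKO} as a known result and used later purely as a black box (in the proof of Theorem 5.1). Judged on its own merits, your reconstruction is correct and is essentially the standard local proof: simultaneous Frobenius coordinates $(x^a,y^\alpha)$ for the two complementary involutive distributions, $\partial_{y^\alpha}g_{ab}=0$ from the totally geodesic hypothesis, the umbilic identity $X\,g(U,V)=-2\,g(U,V)\,g(X,N)$, and integration of $\partial_a g_{\alpha\beta}=2\,\partial_a(\ln f)\,g_{\alpha\beta}$ to $g_{\alpha\beta}=f(x)^2\,\hat g_{\alpha\beta}(y)$. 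Two points should be tightened. First, you credit sphericity with making the dual one-form $\omega=g(N,\cdot)$ both closed along $\mathcal{D}_1$ and basic; in fact closedness is automatic from umbilicity alone (for each fixed $y$ one has $\omega_a=-\tfrac12\,\partial_a\ln g_{\alpha\alpha}$ for any index with $g_{\alpha\alpha}>0$, hence $\omega$ is exact in $x$ --- which is exactly why umbilicity by itself already yields the twisted product, as you correctly observe), so what sphericity buys is precisely basicness. The verification you omit does go through, but it uses both hypotheses: $U\,g(X,N)=g(\nabla_UX,N)+g(X,\nabla^{\perp}_UN)$, where the second term vanishes by parallelism of $N$ and the first vanishes because $2g(\nabla_UX,Z)=\partial_{y^\alpha} g_{ac}=0$ by the Koszul formula, i.e., it rests on the already-established $y$-independence of $g_{ab}$ and hence on the totally geodesic hypothesis, not on sphericity alone. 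Second, non-triviality: ``spherical'' as stated includes the case $N\equiv0$ (a totally geodesic foliation), which produces a trivial direct product, so your closing remark is valid only under the implicit assumption $N\neq0$ --- an imprecision inherited from the theorem's phrasing in the paper rather than a defect of your argument. For the record, Hiepko's original proof is more global and intrinsic than your chart computation, but locally your route is complete and is the same twisted-to-warped reduction found in the standard literature.
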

\section{Pointwise bi-slant submanifolds}
 In this section, we define and
 study pointwise bi-slant submanifold of Kenmotsu manifold.
 \begin{definition}\cite{CHENUD}
   A submanifold $M$ of an almost contact metric manifold $\bar{M}$ is said to be a pointwise bi-slant submanifold if there exists a pair of orthogonal distribution $\mathcal{D}_1$ and $\mathcal{D}_2$ of $M$ at the point $p\in M$ such that\\
   (a) $TM=\mathcal{D}_1\oplus\mathcal{D}_2\oplus\{\xi\}$,\\
   (b) $\phi(\mathcal{D}_1)\bot\mathcal{D}_2\oplus\{\xi\}$,\\
   (c) the distribution $\mathcal{D}_1$ and $\mathcal{D}_2$ are pointwise slant with slant functions $\theta_1$ and $\theta_2$, respectively.
 \end{definition}
 The pair $\{\theta_1,\theta_2\}$ of slant functions is called the bi-slant function and if $\theta_1,\theta_2\neq 0,\frac{\pi}{2}$ and
 both $\theta_1,\theta_2$ are not constant on $M$, then $M$ is called proper pointwise bi-slant submanifold.\\
 If $M$ is a pointwise bi-slant submanifold of $\bar{M}$ then for any $X\in \Gamma(TM)$ we have
 \begin{equation}\label{3.1}
 X=T_1X+T_2X
 \end{equation}
 where $T_1$ and $T_2$ are the projections from $TM$ onto $\mathcal{D}_1$ and $\mathcal{D}_2$, respectively.\\
 \indent If we put $P_1=T_1\circ P$ and $P_2=T_2\circ P$ then from (\ref{3.1}), we get
 \begin{equation}\label{3.2}
 \phi X= P_1X+P_2X+QX,
 \end{equation}
 for $X\in \Gamma(TM)$. From Theorem 2.1, we get
 \begin{equation}\label{3.3}
 P_i^2X=\cos^2\theta_i\{-X+\eta(X)\xi\},\ \ X\in \Gamma(TM), \ \ i=1,2.
 \end{equation}
 For a proper pointwise bi-slant submanifold $M$ of a Kenmotsu manifold $\bar{M}$, the normal bundle of $M$ is decomposed as
 \begin{equation}\label{3.3a}
 T^\bot M=Q\mathcal{D}_1\oplus Q\mathcal{D}_2\oplus\nu,
 \end{equation}
 where $\nu$ is a $\phi$-invariant normal subbundle of $M$.\\
 Now, we find the following lemma for a pointwise bi-slant submanifold $M$ of Kenmotsu manifold $\bar{M}$.
 \begin{lemma}
 Let $M$ be a pointwise bi-slant submanifold of $\bar{M}$ with pointwise slant distributions $\mathcal{D}_1$ and $\mathcal{D}_2$ with distinct slant function $\theta_1$ and $\theta_2$, respectively. Then
 \begin{eqnarray}
 \label{3.4}  (\sin^2\theta_1-\sin^2\theta_2)g(\nabla_XY,Z) =g(A_{QP_2Z}Y-A_{QZ}P_1Y,X)\\
 \nonumber +g(A_{QP_1Y}Z-A_{QY}P_2Z,X)
\end{eqnarray}
and
\begin{eqnarray}
  \label{3.5} (\sin^2\theta_2-\sin^2\theta_1)g(\nabla_ZW,X) =g(A_{QP_2W}X-A_{QW}P_1X,Z)\\
  \nonumber +g(A_{QP_1X}W-A_{QX}P_2W,Z)-\eta(X)g(Z,W),
 \end{eqnarray}
 for any $X,\ Y\in \Gamma(\mathcal{D}_1\oplus\{\xi\})$ and $Z,\ W\in\Gamma(\mathcal{D}_2)$.
 \end{lemma}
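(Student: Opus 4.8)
The plan is to obtain each of (\ref{3.4}) and (\ref{3.5}) by evaluating the same inner product twice---once ``through $\mathcal{D}_1$'', producing a factor $\sin^2\theta_1$, and once ``through $\mathcal{D}_2$'', producing a factor $\sin^2\theta_2$---and then subtracting. For (\ref{3.4}), with $X,Y\in\Gamma(\mathcal{D}_1\oplus\{\xi\})$ and $Z\in\Gamma(\mathcal{D}_2)$, I would begin from $g(\nabla_XY,Z)=g(\bar\nabla_XY,Z)$ and, since $\eta(Z)=0$, use (\ref{2.3}) to rewrite this as $g(\phi\bar\nabla_XY,\phi Z)$. The Kenmotsu identity (\ref{2.5}) then replaces $\phi\bar\nabla_XY$ by $\bar\nabla_X\phi Y$ up to $-g(\phi X,Y)\xi+\eta(Y)\phi X$, both of which pair to zero against $\phi Z$ because $g(\xi,\phi Z)=0$ and $g(\phi X,\phi Z)=g(X,Z)-\eta(X)\eta(Z)=0$. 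Hence $g(\nabla_XY,Z)=g(\bar\nabla_X\phi Y,\phi Z)$. Decomposing $\phi Y=P_1Y+QY$ and $\phi Z=P_2Z+QZ$ and applying (\ref{2.6})--(\ref{2.7}), the $QY$-part contributes $-g(A_{QY}P_2Z,X)+g(\nabla^\bot_XQY,QZ)$, while on the $P_1Y$-part I move $\phi$ back across via (\ref{2.2}) and (\ref{2.5}) and invoke (\ref{3.3}) in the form $\phi P_1Y=P_1^2Y+QP_1Y=\cos^2\theta_1(-Y+\eta(Y)\xi)+QP_1Y$. The gradient-of-$\theta_1$ terms and the $\eta(Y)\xi$ term drop out, since they reach $Z$ only through $g(Y,Z)=0$, $g(\xi,Z)=0$, and $g(\bar\nabla_X\xi,Z)=0$ (the last from (\ref{2.4})). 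This isolates a factor $\cos^2\theta_1\,g(\nabla_XY,Z)$, and rearranging gives the first identity
\[
\sin^2\theta_1\,g(\nabla_XY,Z)=g(A_{QP_1Y}Z-A_{QY}P_2Z,X)+g(\nabla^\bot_XQY,QZ).
\]

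Next I would run the dual computation, starting instead from $g(\nabla_XY,Z)=-g(\bar\nabla_XZ,Y)$ (valid because $g(Y,Z)\equiv0$) and processing the $\mathcal{D}_2$-factor by means of $\phi P_2Z=P_2^2Z+QP_2Z=-\cos^2\theta_2\,Z+QP_2Z$. This yields the second identity
\[
\sin^2\theta_2\,g(\nabla_XY,Z)=g(A_{QZ}P_1Y-A_{QP_2Z}Y,X)+g(\nabla^\bot_XQY,QZ),
\]
where I have used $g(QY,QZ)=0$---an immediate consequence of (\ref{2.3}) and the orthogonality of $\mathcal{D}_1,\mathcal{D}_2$---to replace $-g(\nabla^\bot_XQZ,QY)$ by $g(\nabla^\bot_XQY,QZ)$, and where the stray term $g(\phi X,P_2Z)\eta(Y)$ vanishes because the tangential part $P_1X$ of $\phi X$ lies in $\mathcal{D}_1\perp P_2Z$. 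Subtracting the second identity from the first cancels the common normal-connection term $g(\nabla^\bot_XQY,QZ)$ and produces exactly (\ref{3.4}).

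For (\ref{3.5}) the scheme is identical, now with $X\in\Gamma(\mathcal{D}_1\oplus\{\xi\})$ and $Z,W\in\Gamma(\mathcal{D}_2)$, computing $g(\nabla_ZW,X)$ and again forming a $\theta_1$-version and a $\theta_2$-version whose difference carries the opposite sign $(\sin^2\theta_2-\sin^2\theta_1)$, reflecting that the differentiated vector now lies in $\mathcal{D}_2$. The one genuinely new feature is that, applying (\ref{2.3}) to $g(\bar\nabla_ZW,X)$, the term $\eta(\bar\nabla_ZW)\eta(X)$ no longer drops; instead (\ref{2.4}) gives $\eta(\bar\nabla_ZW)=-g(Z,W)$, and this is precisely the source of the extra summand $-\eta(X)g(Z,W)$ in (\ref{3.5}).

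I expect the main obstacle to be organizational rather than conceptual: controlling the sizeable number of terms thrown off by the two Gauss--Weingarten expansions, and checking that every unwanted contribution---the $\nabla^\bot$ normal-connection terms, the gradient-of-slant-function terms, and the assorted $\eta$-cross terms---either cancels in the subtraction or vanishes by orthogonality of the two slant distributions. The single decisive cancellation is that of the normal-bundle covariant-derivative term, which succeeds only because $g(QY,QZ)=0$; recognizing and justifying this orthogonality is the crux of the argument.
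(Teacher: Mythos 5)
Your argument for (\ref{3.4}) is correct and, modulo packaging, it is the paper's own computation: the paper runs one long chain that pivots midstream from the $P_1Y$-side (producing $\cos^2\theta_1$ via $P_1^2Y=\cos^2\theta_1(-Y+\eta(Y)\xi)$) to the $QZ$-side (producing $\sin^2\theta_2$ via $bQZ=-\sin^2\theta_2 Z$), and the pivot step --- rewriting $g(\bar{\nabla}_XQY,QZ)$ as $-g(\bar{\nabla}_XQZ,\phi Y)+g(\bar{\nabla}_XQZ,P_1Y)$ --- is exactly the orthogonality $g(QY,QZ)=0$ that you single out as the crux; in your layout the same fact instead cancels the two $\nabla^{\bot}$-terms upon subtraction. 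Both of your displayed sub-identities check out, and (\ref{3.4}) itself passes the sanity test $Y=\xi$ (all $Q$- and $P_1$-terms die since $Q\xi=0$, $P_1\xi=0$, $h(\cdot,\xi)=0$).

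The gap is in your dispatch of (\ref{3.5}). You assert that the only new $\eta$-contribution is $\eta(\bar{\nabla}_ZW)\eta(X)=-g(Z,W)\eta(X)$ and that every other $\eta$-cross term vanishes by orthogonality or cancels in the subtraction. That fails at two places. In the $\theta_2$-branch the term $g((\bar{\nabla}_Z\phi)P_2W,X)$ does not vanish: by (\ref{2.5}) it equals $g(\phi Z,P_2W)\eta(X)$, and $g(\phi Z,P_2W)=g(P_2Z,P_2W)=\cos^2\theta_2\,g(Z,W)$ by (\ref{2.10}), leaving $+\cos^2\theta_2\,\eta(X)g(Z,W)$. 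In the $\theta_1$-branch, the contribution $\eta(X)g(\phi Z,\phi W)=\eta(X)g(Z,W)$ from $(\bar{\nabla}_Z\phi)X$, together with $-\cos^2\theta_1\,\eta(X)g(\bar{\nabla}_Z\xi,W)=-\cos^2\theta_1\,\eta(X)g(Z,W)$ from differentiating the $\eta(X)\xi$-part of $\phi P_1X$ (using (\ref{2.4})), leaves a net $+\sin^2\theta_1\,\eta(X)g(Z,W)$. Executed faithfully, your two-branch scheme therefore produces the right-hand side of (\ref{3.5}) with $(\sin^2\theta_1-\sin^2\theta_2)\,\eta(X)g(Z,W)$ in place of $-\eta(X)g(Z,W)$; so the proposal as written does not prove the stated identity.

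You are in good company: the paper's own proof makes the identical silent drops (it discards $g((\bar{\nabla}_Z\phi)P_2W,X)$ and the $\eta$-part of $g(\bar{\nabla}_ZbQX,W)$), and the printed (\ref{3.5}) actually fails the direct check $X=\xi$: there $Q\xi=0$, $P_1\xi=0$, $h(\cdot,\xi)=0$ and $\nabla_Z\xi=Z$, so the right-hand side reduces to $-g(Z,W)$ while the left-hand side is $-(\sin^2\theta_2-\sin^2\theta_1)g(Z,W)$, which would force $\sin^2\theta_2-\sin^2\theta_1=1$, impossible for a proper pointwise bi-slant submanifold. The corrected identity with the $(\sin^2\theta_1-\sin^2\theta_2)\eta(X)g(Z,W)$ term does pass this test. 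In short: for (\ref{3.4}) your proof is right and essentially the paper's; for (\ref{3.5}) your sketch would have to commit the paper's own bookkeeping error to land on the stated formula, and a careful run of your plan proves a corrected statement instead.
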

 \begin{proof}
 For any $X,\ Y\in \Gamma(\mathcal{D}_1\oplus\{\xi\})$ and $Z\in \Gamma(\mathcal{D}_2)$, we have
 \begin{eqnarray*}
   g(\nabla_XY,Z) &=& g(\phi \bar{\nabla}_XY,\phi Z) \\
    &=& g(\bar{\nabla}_X\phi Y,\phi Z)-g((\bar{\nabla}_X\phi)Y,Z) \\
    &=& g(\bar{\nabla}_XP_1Y,\phi Z)+g(\bar{\nabla}_XQY,\phi Z)-\eta(Y)g(\phi X,\phi Z) \\
    &=& -g(\phi \bar{\nabla}_XP_1Y,Z)+g(\bar{\nabla}_XQY,P_2Z)+g(\bar{\nabla}_XQY,QZ)-\eta(Y)g(X,Z)\\
    &=& -g(\bar{\nabla}_XP_1^2Y,Z)-g(\bar{\nabla}_XQP_1Y,Z)+g((\bar{\nabla}_X\phi)P_1Y,Z)\\
    &&+g(\bar{\nabla}_XQY,P_2Z)-g(\bar{\nabla}_XQZ,\phi Y)+g(\bar{\nabla}_XQZ,P_1Y)-\eta(Y)g(X,Z)\\
    &=&\cos^2\theta_1g(\bar{\nabla}_XY,Z)-\sin2\theta_1X(\theta_1)g(Y,Z)-g(\bar{\nabla}_XQP_1Y,Z)  \\
    &&+g(\bar{\nabla}_XQY,P_2Z)+g(\bar{\nabla}_XbQZ,Y)+g(\bar{\nabla}_XcQZ,Y)\\
    &&-g((\bar{\nabla}_X\phi)QZ,Y)+g(\bar{\nabla}_XQZ,P_1Y)-\eta(Y)g(X,Z)
 \end{eqnarray*}
 Using (\ref{2.5}),  (\ref{2.7}), (\ref{2.12}), orthogonality of the distributions and symmetry of the shape operator,  the above equation reduces to
 \begin{eqnarray*}
      g(\nabla_XY,Z) &=& \cos^2\theta_1g(\bar{\nabla}_XY,Z)+g(A_{QP_1Y}Z,X)-g(A_{QY}P_2Z,X) \\
    &&+\sin^2\theta_2g(\bar{\nabla}_XY,Z)+g(A_{QP_2Z}Y,X)-g(A_{QZ}P_1Y,X),
 \end{eqnarray*}
 from which the relation (\ref{3.4}) follows. Also, for $X\in\Gamma(\mathcal{D}_1)$ and $Z,\ W\in\Gamma(\mathcal{D}_2)$, we have
 \begin{eqnarray*}
   g(\nabla_ZW,X) &=& g(\phi \bar{\nabla}_ZW,\phi X)+\eta(X)g(\bar{\nabla}_ZW,\xi) \\
    &=& g(\bar{\nabla}_Z\phi W,\phi X)-g((\bar{\nabla}_Z\phi)W,\phi X)-\eta(X)g(W,\bar{\nabla}_Z\xi) \\
    &=& g(\bar{\nabla}_ZP_2W,\phi X)+g(\bar{\nabla}_ZQW,\phi X)-\eta(X)g(Z,W) \\
    &=& -g(\phi \bar{\nabla}_ZP_2W,X)+g(\bar{\nabla}_ZQW,P_1X)\\
    &&+g(\bar{\nabla}_ZQW,QX)-\eta(X)g(Z,W)\\
    &=& -g(\bar{\nabla}_ZP_2^2W,X)-g(\bar{\nabla}_ZQP_2W,X)+g((\bar{\nabla}_Z\phi)P_2W,X)\\
    &&+g(\bar{\nabla}_ZQW,P_1X)-g(\bar{\nabla}_ZQX,\phi W)\\
    &&+g(\bar{\nabla}_ZQX,P_2W)-\eta(X)g(Z,W)\\
    &=&\cos^2\theta_2g(\bar{\nabla}_ZW,X)-\sin2\theta_2Z(\theta_2)g(W,X)-g(\bar{\nabla}_ZQP_2W,X)  \\
    &&+g(\bar{\nabla}_ZQW,P_1X)+g(\bar{\nabla}_ZbQX,W)+g(\bar{\nabla}_ZcQX,W)\\
    &&-g((\bar{\nabla}_Z\phi)QX,W)+g(\bar{\nabla}_ZQX,P_2W)-\eta(X)g(Z,W)
 \end{eqnarray*}
 Using (\ref{2.5}),  (\ref{2.7}), (\ref{2.12}), orthogonality of the distributions and symmetry of the shape operator, 
 the above equation reduces to
 \begin{eqnarray*}
      g(\nabla_ZW,X) &=& \cos^2\theta_2g(\bar{\nabla}_ZW,X)+g(A_{QP_2W}X,Z)-g(A_{QW}P_1X,Z) \\
    &&+\sin^2\theta_1g(\bar{\nabla}_ZW,X)+g(A_{QP_1X}W,Z)\\
    &&-g(A_{QX}P_2W,Z)-\eta(X)g(Z,W),
 \end{eqnarray*}
from which the relation (\ref{3.5}) follows.
 \end{proof}
 \begin{corollary}
  Let $M$ be a pointwise bi-slant submanifold of $\bar{M}$ with pointwise slant distributions $\mathcal{D}_1$ and $\mathcal{D}_2$ having distinct slant functions $\theta_1$ and $\theta_2$, respectively. Then the distribution $\mathcal{D}_1\oplus\{\xi\}$ defines a totally geodesic foliation if and only if
  \begin{equation}\label{3.6}
    g(A_{QP_2Z}X-A_{QZ}P_1X+A_{QP_1Y}Z-A_{QY}P_2Z,Y)=0,
  \end{equation}
  for any $X,\ Y\in\Gamma(\mathcal{D}_1\oplus\{\xi\})$ and $Z\in \Gamma(D_2)$.
 \end{corollary}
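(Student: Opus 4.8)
The plan is to reduce the statement entirely to the Lemma, in particular to equation (\ref{3.4}). Recall that the distribution $\mathcal{D}_1\oplus\{\xi\}$ defines a totally geodesic foliation exactly when it is integrable and each of its leaves is totally geodesic in $M$; both requirements are subsumed in the single condition $\nabla_XY\in\Gamma(\mathcal{D}_1\oplus\{\xi\})$ for all $X,Y\in\Gamma(\mathcal{D}_1\oplus\{\xi\})$. Since $TM=\mathcal{D}_1\oplus\mathcal{D}_2\oplus\{\xi\}$ and $\mathcal{D}_2$ is precisely the orthogonal complement of $\mathcal{D}_1\oplus\{\xi\}$ inside $TM$, this membership is equivalent to the scalar statement $g(\nabla_XY,Z)=0$ for every $X,Y\in\Gamma(\mathcal{D}_1\oplus\{\xi\})$ and every $Z\in\Gamma(\mathcal{D}_2)$. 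So the first step is to record this equivalence explicitly, turning a geometric condition on leaves into a numerical one.

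Next I would substitute this criterion into the Lemma. Equation (\ref{3.4}) reads
\[
(\sin^2\theta_1-\sin^2\theta_2)\,g(\nabla_XY,Z)=g(A_{QP_2Z}Y-A_{QZ}P_1Y,X)+g(A_{QP_1Y}Z-A_{QY}P_2Z,X),
\]
so the vanishing of $g(\nabla_XY,Z)$ is governed completely by the right-hand side, \emph{provided} the scalar factor on the left does not vanish. This is the one genuine point that needs justification, and I expect it to be the only real obstacle: because $\theta_1$ and $\theta_2$ are distinct slant functions with values in $[0,\tfrac{\pi}{2}]$, where $t\mapsto\sin^2 t$ is strictly monotone and hence injective, we have $\sin^2\theta_1-\sin^2\theta_2\neq 0$ at every point of $M$. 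I would state this nonvanishing as the key observation, since without it neither direction of the biconditional survives.

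Dividing through by $\sin^2\theta_1-\sin^2\theta_2$, the conclusion is then immediate in both directions: $g(\nabla_XY,Z)=0$ for all admissible $X,Y,Z$ if and only if the right-hand side of (\ref{3.4}) vanishes identically, which is exactly condition (\ref{3.6}). I would close by chaining this with the foliation criterion established in the first step. The remaining work is purely formal once the Lemma is available; the only thing demanding attention is careful bookkeeping of which argument of each shape operator is the $\mathcal{D}_1$-vector and which is the $\mathcal{D}_2$-vector, so that the four terms of (\ref{3.6}) are correctly identified with those on the right of (\ref{3.4}).
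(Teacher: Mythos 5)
Your proposal is correct and follows exactly the paper's route: the paper's entire proof of this corollary is the one line ``It follows from (\ref{3.4})'', and you have simply made explicit the details it leaves implicit --- the reduction of the totally geodesic foliation condition to $g(\nabla_XY,Z)=0$ for $Z\in\Gamma(\mathcal{D}_2)$, and the pointwise nonvanishing of $\sin^2\theta_1-\sin^2\theta_2$, which is indeed the tacit role of the hypothesis that the slant functions are distinct (with values in $[0,\tfrac{\pi}{2}]$, where $\sin^2$ is injective). Your closing remark about bookkeeping is apt, since (\ref{3.6}) as printed interchanges $X$ and $Y$ inconsistently relative to the right-hand side of (\ref{3.4}) (an apparent typo in the paper), but since both variables range over $\Gamma(\mathcal{D}_1\oplus\{\xi\})$ the quantified conditions coincide.
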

 \begin{proof}
 It follows from (\ref{3.4}).
 \end{proof}
 \begin{corollary}
   Let $M$ be a pointwise bi-slant submanifold of $\bar{M}$ with pointwise slant distributions $\mathcal{D}_1$ and $\mathcal{D}_2$ having distinct slant functions $\theta_1$ and $\theta_2$, respectively. Then the distribution $\mathcal{D}_2$ defines a totally geodesic foliation if and only if
   \begin{equation}\label{3.7}
   g(A_{QP_2W}X-A_{QW}P_1X,Z)+g(A_{QP_1X}W-A_{QX}P_2W,Z)=\eta(X)g(Z,W),
   \end{equation}
   for any $X\in\Gamma(\mathcal{D}_1\oplus\{\xi\})$ and $Z,\ W\in \Gamma(\mathcal{D}_2)$.
 \end{corollary}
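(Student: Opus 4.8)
The plan is to deduce this characterization directly from the identity (\ref{3.5}), in exactly the same spirit as Corollary 3.1 was obtained from (\ref{3.4}). First I recall that a distribution defines a totally geodesic foliation precisely when it is integrable and each of its leaves is totally geodesic in $M$; for the distribution $\mathcal{D}_2$ this is captured by the single condition $\nabla_ZW\in\Gamma(\mathcal{D}_2)$ for all $Z,W\in\Gamma(\mathcal{D}_2)$, which simultaneously forces integrability (since $[Z,W]=\nabla_ZW-\nabla_WZ$) and geodesic leaves. Because the tangent bundle splits as $TM=\mathcal{D}_1\oplus\mathcal{D}_2\oplus\{\xi\}$, this containment holds if and only if the component of $\nabla_ZW$ in $\mathcal{D}_1\oplus\{\xi\}$ vanishes, that is, $g(\nabla_ZW,X)=0$ for every $X\in\Gamma(\mathcal{D}_1\oplus\{\xi\})$.

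Next I would invoke (\ref{3.5}), which expresses $(\sin^2\theta_2-\sin^2\theta_1)\,g(\nabla_ZW,X)$ in terms of shape-operator quantities together with the term $\eta(X)g(Z,W)$. The vanishing of $g(\nabla_ZW,X)$ is equivalent to the vanishing of the full left-hand side of (\ref{3.5}) provided the scalar factor $\sin^2\theta_2-\sin^2\theta_1$ is nonzero. This is exactly where the hypothesis of \emph{distinct} slant functions enters: since slant functions take values in $[0,\tfrac{\pi}{2}]$, on which $\sin^2$ is strictly increasing and hence injective, $\theta_1\neq\theta_2$ forces $\sin^2\theta_2-\sin^2\theta_1\neq 0$ at every point. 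Consequently $g(\nabla_ZW,X)=0$ for all admissible $X,Z,W$ if and only if the right-hand side of (\ref{3.5}) vanishes, and transposing the term $\eta(X)g(Z,W)$ gives precisely the stated condition (\ref{3.7}).

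Thus the corollary is essentially a one-line consequence of (\ref{3.5}), and there is no serious computational obstacle to overcome. The only point requiring genuine care is the justification that the coefficient $\sin^2\theta_2-\sin^2\theta_1$ is nowhere zero, so that dividing by it is legitimate and the equivalence is truly two-way; without this the ``only if'' direction could fail at points where the slant functions coincide. I would therefore state this nonvanishing explicitly, as it is the sole ingredient that makes the biconditional valid.
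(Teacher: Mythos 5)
Your proposal is correct and follows exactly the paper's route: the authors' entire proof is the remark ``It follows from (\ref{3.5})'', and you have merely (and accurately) filled in the implicit steps --- reducing totally geodesic foliation to $g(\nabla_ZW,X)=0$ for all $X\in\Gamma(\mathcal{D}_1\oplus\{\xi\})$, and dividing by the factor $\sin^2\theta_2-\sin^2\theta_1$, which is nonzero since $\sin^2$ is injective on $[0,\tfrac{\pi}{2}]$ and the slant functions are pointwise distinct. Your explicit flagging of that nonvanishing coefficient is a worthwhile precision that the paper leaves tacit, but it is the same argument.
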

 \begin{proof}
 It follows from (\ref{3.5})
 \end{proof}
 \section{Warped product pointwise bi-slant submanifolds}
 \begin{definition}\cite{BISHOP}
Let $(N_1,g_1)$ and $(N_2,g_2)$ be two Riemannian manifolds with Riemannian metric $g_1$
and $g_2$ respectively and $f$ be a positive definite smooth function on $N_1$. The warped product
of $N_1$ and $N_2$ is the Riemannian manifold $N_1\times_{f}N_2 = (N_1\times N_2,g)$, where
\begin{equation}
\label{4.1}
g=g_1+f^2g_2.
\end{equation}
\end{definition}
\noindent A warped product manifold $N_1\times_{f}N_2$ is said to be trivial if the warping function $f$ is constant.\\
\indent Let $M=N_1\times_{f}N_2$ be a warped product manifold, then we have \cite{BISHOP}
\begin{eqnarray}\label{4.2}
\nabla_UX = \nabla_XU = (X\ln f) U,
\end{eqnarray}
for any $X$, $Y\in\Gamma(TN_1)$ and $U\in\Gamma(TN_2)$.\\ Now we construct an example of warped product pointwise bi-slant submanifold of Kenmotsu manifold.
\begin{example}
We consider $\bar{M}=\mathbb{R}^{13}$ with the cartesian coordinates $(X_1,\ Y_1,\ \cdots,$\\$\ X_6,\ Y_6,\ t)$ and with its usual Kenmotsu structure
$(\phi, \xi, \eta, g)$, given by $$\eta=dt,\ \ \xi=\frac{\partial}{\partial t}, \ \ g
=\eta\otimes\eta+e^{2t}\{(dx^i\otimes dx^i+dy^i\otimes dy^i)\}$$ and
$$\phi\bigg(\displaystyle\sum_{i=1}^{6}(X_i\frac{\partial}{\partial x^i}+Y_i\frac{\partial}{\partial y^i})+t\frac{\partial}{\partial t}\bigg)=
\displaystyle\sum_{i=1}^{6}(-Y_i\frac{\partial}{\partial x^i}+X_i\frac{\partial}{\partial y^i}).$$\\
Now, we consider a submanifold of $\mathbb{R}^{13}$ defined by the immersion $\chi$ as follows:
\begin{eqnarray*}
\chi(u, v, \theta, \phi,\,t)=(u\cos\theta,\,v\cos\phi,\,u\sin\theta,\,v\sin\phi,\,u\cos\phi,\,v\cos\theta,\\
u\sin\phi,\,u\sin\theta,3\theta+2\phi,\,2\theta+3\phi,\,0,\,0,\,t).
\end{eqnarray*}
Then, it is easy to see that
\begin{eqnarray*}
  Z_1 &=& \frac{1}{e^t}\bigg(\cos \theta\frac{\partial}{\partial x_1}+\sin \theta\frac{\partial}{\partial x_2}+\cos \phi\frac{\partial}{\partial x_3}
  +\sin\phi\frac{\partial}{\partial x_4}\bigg), \\
  Z_2 &=& \frac{1}{e^t}\bigg(\cos\phi\frac{\partial}{\partial y_1}+\sin\phi\frac{\partial}{\partial y_2}+\cos \theta\frac{\partial}{\partial y_3}
  +\sin\theta\frac{\partial}{\partial y_4}\bigg), \\
  Z_3 &=&\frac{1}{e^t}\bigg( -u\sin \theta\frac{\partial}{\partial x_1}+u\cos \theta\frac{\partial}{\partial x_2}-v\sin \theta\frac{\partial}{\partial x_3}
  +v\cos\theta\frac{\partial}{\partial x_4}+3\frac{\partial}{\partial x_5}+2\frac{\partial}{\partial y_5}\bigg), \\
  Z_4 &=& \frac{1}{e^t}\bigg(-v\sin \phi\frac{\partial}{\partial y_1}+v\cos \phi\frac{\partial}{\partial y_2}-u\sin \phi\frac{\partial}{\partial y_3}
  +v\cos\phi\frac{\partial}{\partial y_4}+2\frac{\partial}{\partial x_5}+3\frac{\partial}{\partial y_5}\bigg),\\
  Z_5 &=&\frac{\partial}{\partial t}.
\end{eqnarray*}
form a local orthonarmal frame of $TM$. Also, we have
\begin{eqnarray*}
  \phi Z_1 &=& \frac{1}{e^t}\bigg(-\cos \theta\frac{\partial}{\partial y_1}-\sin \theta\frac{\partial}{\partial y_2}-\cos \phi\frac{\partial}{\partial y_3}
  -\sin\phi\frac{\partial}{\partial y_4}\bigg), \\
  \phi Z_2 &=&\frac{1}{e^t}\bigg( \cos\phi\frac{\partial}{\partial x_1}+\sin\phi\frac{\partial}{\partial x_2}+\cos \theta\frac{\partial}{\partial x_3}
  +\sin\theta\frac{\partial}{\partial x_4}\bigg), \\
  \phi Z_3 &=& \frac{1}{e^t}\bigg( u\sin \theta\frac{\partial}{\partial y_1}-u\cos \theta\frac{\partial}{\partial y_2}-v\sin \theta\frac{\partial}{\partial y_3}
  -v\cos\theta\frac{\partial}{\partial y_4}-3\frac{\partial}{\partial y_5}+2\frac{\partial}{\partial x_5}\bigg), \\
  \phi Z_4 &=&\frac{1}{e^t}\bigg(-v\sin \phi\frac{\partial}{\partial y_1}+v\cos \phi\frac{\partial}{\partial y_2}+u\sin \phi\frac{\partial}{\partial y_3}
  -v\cos\phi\frac{\partial}{\partial y_4}-2\frac{\partial}{\partial y_5}+3\frac{\partial}{\partial x_5}\bigg),\\
  \phi Z_5 &=& 0.
\end{eqnarray*}
If we define $\mathcal{D}_1=\{Z_1,\ Z_2\}$ and $\mathcal{D}_2=\{Z_3,\,Z_4\}$, then by simple calculations, we obtain
$g(\phi Z_1,Z_2)=2\cos(\theta-\phi)$ and $g(\phi Z_3,Z_4)=5$ and hence the distributions $\mathcal{D}_1$ and $\mathcal{D}_2$ are pointwise slant with slant
functions $\cos^{-1}[2\cos(\theta-\phi)]$ and $\cos^{-1}(\frac{5}{\sqrt{u^2+v^2+13}})$, respectively. Also, $\xi$ is tangent to $\mathcal{D}_1$.
 Consequently, $M$ is a proper pointwise bi-slant submanifold of $\mathbb{R}^{13}$. Also, it is clear that both the distributions $\mathcal{D}_1\oplus\{\xi\}$ and $\mathcal{D}_2$ are integrable. If we denote the integral manifolds of $\mathcal{D}_1\oplus\{\xi\}$ and
 $\mathcal{D}_2$ by $M_1$ and $M_2$, respectively then the metric tensor $g_{M}$ of $M$ is given by $$g_M=2(du^2+dv^2)+dt^2+(u^2+v^2+13)(d\theta^2+d\phi^2),$$ where $g_{M_1}=2(du^2+dv^2)+dt^2$ is the metric tensor of $M_1$ and
 $g_{M_2}=(u^2+v^2+13)(d\theta^2+d\phi^2)$ is the metric tensor of $M_2$. Thus $M=M_1\times_fM_2$ is a warped product pointwise bi-slant submanifold of Kenmotsu manifold with warping function $f=\sqrt{u^2+v^2+13}$.
\end{example}
Now we have the following lemmas:
\begin{lemma}
  Let $M=M_1\times_fM_2$ be a warped product pointwise bi-slant submanifold of a Kenmotsu manifold $\bar{M}$ with distinct slant functions $\theta_1$ and $\theta_2$ such that $\xi\in \Gamma(TM_1)$. Then
    \begin{eqnarray}
  \label{4.3}    g(h(X,W),QP_2Z)-g(h(X,P_2Z),QW)&=&\sin\theta_2X(\theta_2)g(Z,W)
  \end{eqnarray}
  and
  \begin{eqnarray}
   \label{4.4}g(h(X,Z),QW)-g(h(X,W),QZ)&=&\tan\theta_2X(\theta_2)g(P_2Z,W),
    \end{eqnarray}
    for any $X\in\Gamma(TM_1)$ and $Z,\ W\in\Gamma(TM_2)$.
\end{lemma}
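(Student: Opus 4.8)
The plan is to run the standard Kenmotsu warped-product machinery, with the Kenmotsu identity \eqref{2.5} as the engine. First I would note that for $X\in\Gamma(TM_1)$ and $Z\in\Gamma(TM_2)$ one has $\eta(Z)=0$ and $g(\phi X,Z)=0$, the latter because $\phi X$ has no $\mathcal{D}_2$-component (for $X\in\mathcal{D}_1$ it lies in $\mathcal{D}_1\oplus Q\mathcal{D}_1$, and $\phi\xi=0$). Hence \eqref{2.5} collapses to $(\bar\nabla_X\phi)Z=0$, i.e. $\bar\nabla_X\phi Z=\phi\,\bar\nabla_XZ$; this single identity drives the whole argument.

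Next I would expand both sides. Writing $\phi Z=P_2Z+QZ$ and using Gauss \eqref{2.6} and Weingarten \eqref{2.7}, the left-hand side reads $\nabla_X(P_2Z)+h(X,P_2Z)-A_{QZ}X+\nabla^{\bot}_XQZ$. For the right-hand side I use the warped-product formula \eqref{4.2}, $\nabla_XZ=(X\ln f)Z$, together with the decomposition \eqref{2.8}, to obtain $(X\ln f)(P_2Z+QZ)+b\,h(X,Z)+c\,h(X,Z)$. Comparing normal parts gives the working identity $\nabla^{\bot}_XQZ=(X\ln f)QZ+c\,h(X,Z)-h(X,P_2Z)$, and comparing tangential parts gives the companion expression for $\nabla_X(P_2Z)$.

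The decisive step, and the source of the slant-derivative terms, is the differentiation of the pointwise-slant relations. For \eqref{4.3} I would differentiate \eqref{2.11}, $g(QZ,QW)=\sin^2\theta_2\{g(Z,W)-\eta(Z)\eta(W)\}$, along $X$. Metric compatibility turns the left side into $g(\nabla^{\bot}_XQZ,QW)+g(QZ,\nabla^{\bot}_XQW)$, while the right side yields $\sin2\theta_2\,X(\theta_2)g(Z,W)+2\sin^2\theta_2(X\ln f)g(Z,W)$, using $Xg(Z,W)=2(X\ln f)g(Z,W)$ (itself a consequence of \eqref{4.2}). Substituting the normal identity for $\nabla^{\bot}_XQZ$ and reducing the resulting $\phi$-terms by means of \eqref{2.12} ($cQW=-QP_2W$, $bQW=-\sin^2\theta_2 W$) converts every normal-connection contribution into shape-operator pairings and isolates the combination appearing in \eqref{4.3}. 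For \eqref{4.4} I would instead start from $g(P_2Z,W)=g(\phi Z,W)$ (equivalently from \eqref{2.10}) and differentiate; the tangential identity together with $P_i^2=\cos^2\theta_i(-I+\eta\otimes\xi)$ from \eqref{3.3} forces a factor $\cos^2\theta_2$ out of the slant terms on $\mathcal{D}_2$, and dividing it back converts $X(\theta_2)$ into the coefficient $\tan\theta_2\,X(\theta_2)$ multiplying $g(P_2Z,W)$.

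The step I expect to be genuinely delicate is the bookkeeping of the normal connection $\nabla^{\bot}_XQZ$ and the tangential derivative $\nabla_X(P_2Z)$. The fields $P_2Z$ and $QZ$ are not basic for the warped product --- they rotate with the slant function $\theta_2$ along the base $M_1$ --- so $\nabla_X$ does not act on them by the naive scaling $(X\ln f)$, and it is exactly this discrepancy that produces the $X(\theta_2)$ terms. The care needed is to keep the warping-function derivative $X\ln f$ and the slant-function derivative $X(\theta_2)$ separated rather than collapsing them prematurely through \eqref{4.2}; once the normal identity and the differentiated relations \eqref{2.10}--\eqref{2.11} are in hand, both \eqref{4.3} and \eqref{4.4} follow by collecting the $h$-pairings.
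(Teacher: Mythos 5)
Your setup is sound and runs on the same engine as the paper's proof: the collapse of \eqref{2.5} to $(\bar{\nabla}_X\phi)Z=0$, the Gauss--Weingarten expansion of $\bar{\nabla}_X\phi Z=\phi\bar{\nabla}_XZ$, and your working identity $\nabla^{\bot}_XQZ=(X\ln f)QZ+c\,h(X,Z)-h(X,P_2Z)$ are all correct. But there is a genuine gap at the decisive step. Differentiating $g(QZ,QW)=\sin^2\theta_2\,g(Z,W)$ along $X$ and substituting the working identity in \emph{both} slots of the metric-compatibility formula can only ever produce the $Z\leftrightarrow W$-symmetrized relation
\begin{align*}
&\bigl[g(h(X,W),QP_2Z)-g(h(X,P_2Z),QW)\bigr]\\
&\qquad+\bigl[g(h(X,Z),QP_2W)-g(h(X,P_2W),QZ)\bigr]=\sin 2\theta_2\,X(\theta_2)\,g(Z,W),
\end{align*}
and the quantity $T(Z,W)=g(h(X,W),QP_2Z)-g(h(X,P_2Z),QW)$ in \eqref{4.3} is not manifestly symmetric in $(Z,W)$, so it cannot be ``isolated'' from this sum. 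Concretely, setting $B(Z,W)=g(h(X,Z),QW)-g(h(X,W),QZ)$ one has $T(Z,W)=-B(P_2Z,W)$ and $T(Z,W)-T(W,Z)=-B(P_2Z,W)-B(Z,P_2W)$, which has no reason to vanish for a general antisymmetric $B$. The missing input is an identity pinning $B$ down, obtained by differentiating along \emph{fiber} directions rather than base directions: this is the paper's computation of $g(h(X,Z),QW)=g(\bar{\nabla}_ZX,QW)$, where $\bar{\nabla}_Z\xi=Z$ enters and which yields \eqref{4.11}, $B(Z,W)=2[(X\ln f)-\eta(X)]g(P_2Z,W)$. Proportionality of $B$ to $g(P_2\,\cdot\,,\cdot)$ is exactly what forces $T(Z,W)=T(W,Z)$ (since $g(P_2^2Z,W)=-g(P_2Z,P_2W)$ on $\mathcal{D}_2$) and lets you halve the symmetrized identity. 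Your argument, restricted to the tools you invoke, cannot be completed.

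Your route to \eqref{4.4} fails for the same structural reason: differentiating $g(P_2Z,W)=g(\phi Z,W)$ along $X$ gives $Xg(P_2Z,W)=2(X\ln f)g(P_2Z,W)+g(h(X,W),QZ)-g(h(X,Z),QW)$ \emph{both} via parallelism of $\phi$ and via your tangential identity --- the two expansions coincide identically, so the step is circular; and differentiating \eqref{2.10} merely reproduces the same symmetrized relation as \eqref{2.11}. The paper instead deduces \eqref{4.4} from \eqref{4.3} by the substitution $Z\mapsto P_2Z$ together with $P_2^2=-\cos^2\theta_2\,I$ on $\mathcal{D}_2$, a step available only once the unsymmetrized \eqref{4.3} is in hand. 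Two further remarks. First, your caution about the non-basic fields $P_2Z,\ QZ$ is well taken: the paper's own proof silently sets $g(\bar{\nabla}_XP_2Z,P_2W)=(X\ln f)g(P_2Z,P_2W)$, which is precisely the naive scaling you warn against. Second, carried to completion (symmetrized identity plus the symmetry of $T$ from \eqref{4.11}), your computation gives $T(Z,W)=\sin\theta_2\cos\theta_2\,X(\theta_2)\,g(Z,W)$; this is the coefficient consistent with \eqref{4.4} under $Z\mapsto P_2Z$, whereas the coefficient $\sin\theta_2\,X(\theta_2)$ printed in \eqref{4.3} is not, so your careful bookkeeping in fact corrects the stated lemma rather than merely reproving it.
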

\begin{proof}
 For any $X\in \Gamma(TM_1)$ and $Z,\ W\in \Gamma(TM_2)$, we have from (\ref{4.2}) that
 \begin{equation}\label{4.5}
 g(\bar{\nabla}_XZ,W)=g(\nabla_XZ,W)=(X\ln f)g(Z,W).
 \end{equation}
 Also, we have
 \begin{eqnarray*}
   g(\bar{\nabla}_XZ,W) &=& g(\phi\bar{\nabla}_XZ,\phi W)+\eta(W)g(\bar{\nabla}_XZ,\xi) \\
    &=&g(\bar{\nabla}_X\phi Z,\phi W)-g((\bar{\nabla}_X\phi)Z,W)  \\
    &=& g(\bar{\nabla}_XP_2Z,P_2W)+g(\bar{\nabla}_XP_2Z,QW)+g(\bar{\nabla}_XQZ,\phi W) \\
    &=& g(\bar{\nabla}_XP_2Z,P_2W)+g(\bar{\nabla}_XP_2Z,QW)-g(\bar{\nabla}_XbQZ,W)\\
    &&-g(\bar{\nabla}_XcQZ,W)+g(\bar{\nabla}_XQP_2Z,W).
 \end{eqnarray*}
 Using (\ref{2.5})-(\ref{2.7}) and (\ref{2.12}), the above equation yields
 \begin{eqnarray*}
   g(\bar{\nabla}_XZ,W) = g(\bar{\nabla}_XP_2Z,P_2W)+g(h(X,P_2Z),QW)\\
   +\sin^2\theta_2g(\bar{\nabla}_XZ,W)+\sin\theta_2X(\theta_2)g(Z,W)+g(h(X,W),QP_2Z).
 \end{eqnarray*}
 Using (\ref{4.2}) in the above equation, we obtain
 \begin{eqnarray}
 \label{4.6} g(\bar{\nabla}_XZ,W) = (X\ln f)g(P_2Z,P_2W)+g(h(X,P_2Z),QW) \\
  \nonumber +\sin^2\theta_2(X\ln f)g(Z,W)+\sin\theta_2X(\theta_2)g(Z,W)+g(h(X,W),QP_2Z).
 \end{eqnarray}
 Using (\ref{2.9}) and (\ref{4.5}), (\ref{4.6}) yields
 \begin{eqnarray}
 \label{4.7}(X\ln f)g(Z,W)=\cos^2\theta_2 X\ln fg(Z,W)+g(h(X,P_2Z),QW) \\
  \nonumber+\sin^2\theta_2(X\ln f)g(Z,W) +\sin\theta_2X(\theta_2)g(Z,W)+g(h(X,W),QP_2Z).
 \end{eqnarray}
 The relation (\ref{4.3}) follows from (\ref{4.7}) and (\ref{4.4}) obtained from (\ref{4.3}) by interchanging $Z$ by $P_2Z$.
\end{proof}
\begin{lemma}
  Let $M= M_1\times_fM_2$ be a warped product pointwise bi-slant submanifold of a Kenmotsu manifold $\bar{M}$ such that $\xi\in\Gamma(TM_1)$ with distinct slant functions $\theta_1,  \theta_2$. Then
  \begin{eqnarray}
  \label{4.8}  g(h(X,W),QP_2Z)-g(h(X,P_2Z),QW)=\\
  \nonumber 2\cos^2\theta_2\bigg\{(X\ln f)-\eta(X)\bigg\}g(Z,W),
  \end{eqnarray}
  for any $X\in\Gamma(TM_1)$ and $Z,\ W\in\Gamma(TM_2)$.
\end{lemma}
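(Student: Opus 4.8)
The plan is to evaluate the two scalars on the left of (4.8) separately and then subtract. The decisive point is to read each as a normal component of $\bar{\nabla}_V X$ with $V\in\Gamma(TM_2)$: by the symmetry of $h$ together with (2.6) and (2.7),
\[
g(h(X,W),QP_2Z)=g(\bar{\nabla}_W X,QP_2Z),\qquad g(h(X,P_2Z),QW)=g(\bar{\nabla}_{P_2Z}X,QW).
\]
Differentiating $X\in\Gamma(TM_1)$ along a vertical direction is what makes the warped-product relation (4.2) usable in its exact form $\nabla_V X=(X\ln f)V$; had I instead differentiated the vertical fields along $X$, the variation of $\theta_2$ would intrude, which is precisely the mechanism that produced the $\sin\theta_2\,X(\theta_2)$ term in (4.3).

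First I would rewrite the two normal arguments through $\phi$. Since $P_2Z,\,W\in\Gamma(\mathcal{D}_2)$ and $\xi\in\Gamma(TM_1)$, relation (3.3) gives $P_2^2Z=-\cos^2\theta_2\,Z$, whence $QP_2Z=\phi P_2Z+\cos^2\theta_2\,Z$ and $QW=\phi W-P_2W$. Substituting these and transferring $\phi$ across the metric by (2.2), each scalar breaks into a piece paired with the tangential vectors $P_2Z,\,W$ — which I evaluate with $\nabla_V X=(X\ln f)V$ and the slant identity (2.10) — and a piece carrying the non-parallelism of $\phi$. The latter is where $\eta(X)$ is born: by (2.5), for vertical $V$ one has $(\bar{\nabla}_V\phi)X=g(\phi V,X)\xi-\eta(X)\phi V$, and the summand $-\eta(X)\phi V$ survives because $g(\phi V,P_2Z)$ and $g(\phi V,W)$ reduce, via (2.10) and (3.3), to nonzero multiples of $\cos^2\theta_2\,g(Z,W)$.

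Carrying the bookkeeping through, I expect the two scalars to take the form
\[
g(h(X,W),QP_2Z)=\Lambda+\cos^2\theta_2\big(X\ln f-\eta(X)\big)g(Z,W),
\]
\[
g(h(X,P_2Z),QW)=\Lambda-\cos^2\theta_2\big(X\ln f-\eta(X)\big)g(Z,W),
\]
with the common block $\Lambda=-\big((P_1X)\ln f\big)g(P_2Z,W)+g(h(P_2Z,W),QX)$ gathering all data coming from the non-invariant part $\phi X=P_1X+QX$ of $X$. Subtracting, the block $\Lambda$ cancels — using $g(P_2Z,W)=g(W,P_2Z)$ and $h(P_2Z,W)=h(W,P_2Z)$ — while the two $\cos^2\theta_2(X\ln f-\eta(X))g(Z,W)$ terms reinforce, giving precisely (4.8).

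The main obstacle is organisational rather than conceptual. I must (i) commit to differentiating the horizontal field $X$ along the vertical directions, so that (4.2) is applied where it is exact and the $\theta_2$-derivatives never appear; and (ii) confirm that the entire $\mathcal{D}_1$-block $\Lambda$ — including the $P_1X$ and $QX$ contributions forced by the fact that $X$ need not be $\phi$-invariant — is the \emph{same} in both scalars, so that it disappears on subtraction. It is this symmetry of $\Lambda$ that collapses the computation onto the single slant function $\theta_2$ and the warping term $X\ln f-\eta(X)$; verifying it is the one place where sign errors are easy to make.
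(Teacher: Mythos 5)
Your proposal is correct and essentially reproduces the paper's own proof: both arguments rest on reading $g(h(X,Z),QW)$ as $g(\bar{\nabla}_ZX,QW)$ (so that the warped product formula (4.2) applies exactly, as you note), decomposing the normal parts via $\phi$ and $P_2$, invoking the Kenmotsu identity (2.5) to produce the $\eta(X)$ term, and antisymmetrizing so that the common block (your $\Lambda$, the paper's terms $-(P_1X\ln f)g(Z,W)+g(h(Z,W),QX)$ in (4.9)--(4.10)) cancels. The only difference is ordering, and it is purely cosmetic: the paper first derives the general identity (4.11) for arbitrary $Z,\,W\in\Gamma(TM_2)$ and then substitutes $Z\mapsto P_2Z$ using $P_2^2=-\cos^2\theta_2$ on $\mathcal{D}_2$, whereas you substitute $P_2Z$ from the outset and cancel $\Lambda$ at the end.
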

\begin{proof}
For any $X\in\Gamma(TM_1)$ and $Z,\ W\in\Gamma(TM_2)$, we have
\begin{eqnarray*}
  g(h(X,Z),QW) &=& g(\bar{\nabla}_ZX,QW) \\
   &=& g(\bar{\nabla}_ZX,\phi W)-g(\bar{\nabla}_ZX,P_2W)\\
   &=& -g(\bar{\nabla}_Z\phi X,W)+g((\bar{\nabla}_Z\phi)X,W)-g(\bar{\nabla}_ZX,P_2W)\\
   &=&-g(\bar{\nabla}_ZP_1X,W)-g(\bar{\nabla}_ZQX,W)\\
   &&-\eta(X)g(\phi Z,W)-g(\bar{\nabla}_ZX,P_2W)
\end{eqnarray*}
Using (\ref{4.2}) in the above equation, we get
\begin{eqnarray}
  \label{4.9} g(h(X,Z),QW) &=& -(P_1X\ln f)g(Z,W)+g(h(Z,W),QX) \\
\nonumber   &&+\bigg\{(X\ln f)-\eta(X)\bigg\}g(P_2Z,W).
\end{eqnarray}
By polarization, we obtain
\begin{eqnarray}
  \label{4.10}g(h(X,W),QZ) &=& -(P_1X\ln f)g(W,Z)+g(h(Z,W),QX) \\
  \nonumber && +\bigg\{(X\ln f)-\eta(X)\bigg\}g(P_2W,Z).
\end{eqnarray}
Subtracting (\ref{4.10}) from (\ref{4.9}), we get
\begin{eqnarray}
  \label{4.11}g(h(X,Z),QW)-g(h(X,W),QZ) \\
  \nonumber= 2[(X\ln f)-\eta(X)]g(P_2Z,W).
\end{eqnarray}
Interchanging $Z$ by $P_2Z$ in (\ref{4.11}), we get (\ref{4.8}).
\end{proof}
\begin{theorem}
There exists a proper warped product pointwiase bi-slant submanifold $M=M_1\times_fM_2$ of a Kenmotsu manifold $\bar{M}$, with distinct slant functions $\theta_1$ and $\theta_2$, if and only if $\tan\theta_2X(\theta_2)-\eta(X)\neq0$.
\end{theorem}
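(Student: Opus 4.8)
The plan is to eliminate the second fundamental form between the two preceding lemmas and thereby obtain a single pointwise identity that ties the warping function $f$ to the slant function $\theta_2$; the asserted biconditional is then just the statement that this identity is non-degenerate. The crucial observation is that Lemma 4.1 and Lemma 4.2 both compute the \emph{same} antisymmetrised quantity, namely $g(h(X,Z),QW)-g(h(X,W),QZ)$ (equivalently $g(h(X,W),QP_2Z)-g(h(X,P_2Z),QW)$), but express it through two different pieces of data: equation (\ref{4.4}) presents it through $\tan\theta_2\,X(\theta_2)$, while the computation behind (\ref{4.8}), i.e. relation (\ref{4.11}), presents it through $(X\ln f)-\eta(X)$. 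So the first step is simply to set the two right-hand sides equal.

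Equating (\ref{4.4}) with (\ref{4.11}) gives, for all $X\in\Gamma(TM_1)$ and all $Z,W\in\Gamma(TM_2)$, an identity of the form
\[
\tan\theta_2\,X(\theta_2)\,g(P_2Z,W)=\bigl[(X\ln f)-\eta(X)\bigr]g(P_2Z,W),
\]
up to the numerical factors delivered by the lemmas. Since $M_2$ is a \emph{proper} pointwise slant submanifold we have $P_2\neq 0$, so one may choose $Z,W\in\Gamma(TM_2)$ with $g(P_2Z,W)\neq 0$; cancelling this scalar factor converts the relation into the genuine pointwise identity
\[
X\ln f=\tan\theta_2\,X(\theta_2)-\eta(X),\qquad X\in\Gamma(TM_1).
\]
In other words, the restriction of the gradient $\boldsymbol{\nabla}(\ln f)$ to $TM_1$ is completely prescribed by $\theta_2$ and $\eta$. (The same relation can be reached by instead comparing (\ref{4.3}) with (\ref{4.8}), which share an identical left-hand side.)

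Now recall that a warped product $M_1\times_fM_2$ is proper exactly when it is non-trivial, that is, when the warping function $f$ is non-constant on $M_1$; by (\ref{2.8b})--(\ref{2.8c}) this is equivalent to $\boldsymbol{\nabla}(\ln f)\neq 0$, i.e. to $X\ln f\neq 0$ for some $X\in\Gamma(TM_1)$. Substituting the identity of the previous step, this holds if and only if $\tan\theta_2\,X(\theta_2)-\eta(X)\neq 0$ (understood as: not identically zero in $X$), which is precisely the asserted condition. Finally, the existence of at least one such submanifold realising the condition is furnished by the explicit construction of Example 4.1, which produces a proper warped product pointwise bi-slant submanifold of $\mathbb{R}^{13}$ with the non-constant warping function $f=\sqrt{u^2+v^2+13}$.

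I expect the main obstacle to be the cancellation step: one must confirm that $g(P_2Z,W)$ (equivalently $g(Z,W)$) is not identically zero, so that the factor may legitimately be removed to yield a true \emph{pointwise} identity rather than merely an integrated consequence; this is where properness of the slant distribution $\mathcal{D}_2$ is genuinely used. A secondary point demanding care is the bookkeeping of the numerical constants and signs—coming from the skew-symmetry of $P_2$, the decompositions (\ref{2.12}), and the Kenmotsu identity $\bar{\nabla}_X\xi=X-\eta(X)\xi$ of (\ref{2.4})—since these fix the exact coefficients of $\tan\theta_2\,X(\theta_2)$ and of $\eta(X)$ in the final relation, and hence the precise form of the non-triviality criterion.
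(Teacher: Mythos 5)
Your proposal is correct and takes essentially the same route as the paper: the paper's proof equates (\ref{4.3}) with (\ref{4.8}) to obtain (\ref{4.12}), cancels $g(Z,W)$ using properness ($\theta_2\neq\frac{\pi}{2}$, choosing $W=Z\neq 0$), and reads off $(X\ln f)=\eta(X)-\tan\theta_2\,X(\theta_2)$, while your comparison of (\ref{4.4}) with (\ref{4.11}) is the identical elimination of $h$ after the substitution $Z\mapsto P_2Z$. Your caveat about signs and numerical factors is well placed --- the coefficients stated in (\ref{4.3})--(\ref{4.4}) are in fact inconsistent with the $-\sin 2\theta_2\,X(\theta_2)$ appearing in the paper's (\ref{4.12}) --- but this affects only bookkeeping, not the method, and your explicit treatment of the non-triviality step (via $\boldsymbol{\nabla}\ln f\neq 0$ and Example 4.1) fills in what the paper leaves implicit.
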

\begin{proof}
For any $X\in\Gamma(TM_1)$ and $Z,\ W\in \Gamma(TM_2)$ from (\ref{4.3}) and (\ref{4.8}) , we have
\begin{equation}\label{4.12}
2\cos^2\theta_2[(X\ln f)-\eta(X)]g(Z,W)=-\sin2\theta_2X(\theta_2)g(Z,W).
\end{equation}
Since $M$ is proper so $\theta_2\neq\frac{\pi}{2}$ and hence from (\ref{4.12}), we get $[(X\ln f)-\eta(X)+\tan\theta_2X(\theta_2)]g(Z,W)=0$,
which implies that $(X\ln f)=\eta(X)-\tan(\theta_2)X(\theta_2)$. This proves the theorem.
\end{proof}
\indent A warped product submanifold $M=M_1\times_fM_2$ of a Kenmotsu manifold $\bar{M}$ is known to be mixed totally geodesic if $h(X,Z)=0$,
for any $X\in \Gamma(TM_1)$ and $Z\in \Gamma(TM_2)$. \\
\begin{theorem}
Let $M=M_1\times_fM_2$ be a warped product pointwise bi-slant submanifold of a Kenmotsu manifold $\bar{M}$, withd istinct slant functions $\theta_1$ and $\theta_2$. $M$ is a mixed totally geodesic
warped product submanifold, then one of the following two cases holds:\\
(i) $\theta_2=\frac{\pi}{2}$, i.e., $M$ is a warped product pointwise pseudo slant submanifold of the
form $M_1\times_f M_\bot$, where $M_\bot$ is an anti-invariant submanifold $\bar{M}$.\\
(ii) or $(X\ln f)=\eta(X)$.
\end{theorem}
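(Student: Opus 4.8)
The plan is to read the conclusion straight out of Lemma 4.2, since the mixed totally geodesic hypothesis is precisely what annihilates the left-hand side of (\ref{4.8}). First I would record that mixed totally geodesicity means $h(X,U)=0$ for every $X\in\Gamma(TM_1)$ and $U\in\Gamma(TM_2)$. The one observation worth spelling out is that for $Z\in\Gamma(TM_2)$ the field $P_2Z$ again lies in $\Gamma(TM_2)$, because $P_2=T_2\circ P$ projects onto $\mathcal{D}_2=TM_2$; hence both $h(X,W)$ and $h(X,P_2Z)$ that appear in (\ref{4.8}) vanish.

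Substituting $h(X,W)=0$ and $h(X,P_2Z)=0$ into (\ref{4.8}) collapses its left-hand side to zero, leaving
\[
2\cos^2\theta_2\{(X\ln f)-\eta(X)\}g(Z,W)=0
\]
for all $X\in\Gamma(TM_1)$ and $Z,\ W\in\Gamma(TM_2)$. Choosing $W=Z$ with $Z\neq 0$ and using positive-definiteness of $g$, so that $g(Z,Z)>0$, I would divide out $g(Z,Z)$ to obtain the pointwise scalar identity $\cos^2\theta_2\{(X\ln f)-\eta(X)\}=0$.

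Finally I would split into the two alternatives produced by this product being zero. Either $\cos^2\theta_2=0$, that is $\theta_2=\frac{\pi}{2}$, in which case the slant function of $\mathcal{D}_2$ equals $\frac{\pi}{2}$, so $\mathcal{D}_2$ is anti-invariant and its integral manifold $M_2=M_\bot$ is an anti-invariant submanifold; combined with the pointwise slant factor $M_1$ this exhibits $M=M_1\times_f M_\bot$ as a warped product pointwise pseudo-slant submanifold, which is case (i). Otherwise $(X\ln f)-\eta(X)=0$, giving $(X\ln f)=\eta(X)$, which is case (ii). There is no real obstacle here: the theorem is an immediate corollary of Lemma 4.2, and the only points that merit a line of justification are the closure $P_2(TM_2)\subseteq TM_2$ and the passage from the tensorial identity to the scalar one through $g(Z,Z)>0$.
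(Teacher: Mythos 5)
Your proposal is correct and follows essentially the same route as the paper: the paper's own proof likewise substitutes the mixed totally geodesic condition into the identity (\ref{4.8}) of Lemma 4.2 to get $\cos^2\theta_2[(X\ln f)-\eta(X)]g(Z,W)=0$ and then reads off the same dichotomy $\theta_2=\frac{\pi}{2}$ or $(X\ln f)=\eta(X)$. Your additional justifications --- that $P_2Z\in\Gamma(TM_2)$ so both second fundamental form terms vanish, and that one may take $W=Z\neq 0$ and divide by $g(Z,Z)>0$ --- merely make explicit steps the paper leaves tacit.
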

\begin{corollary}
  For a proper warped product mixed totally geodesic pointwise bi-slant submanifold $M=M_1\times f M_2$ of a Kenmotsu manifold $\bar{M}$ such that $\xi\in \Gamma(TM_1)$,
  $(\xi \ln f)=1$.
\end{corollary}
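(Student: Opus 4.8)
The plan is to obtain this as an immediate specialization of Theorem~4.2. First I would unpack the word \emph{proper}: by definition a proper pointwise bi-slant submanifold has slant functions satisfying $\theta_1,\theta_2\neq 0,\frac{\pi}{2}$ (and neither constant). In particular $\theta_2\neq\frac{\pi}{2}$, so alternative (i) of Theorem~4.2, which asserts precisely $\theta_2=\frac{\pi}{2}$, cannot occur. Since $M$ is assumed mixed totally geodesic, the hypotheses of Theorem~4.2 are met, and with case (i) excluded the theorem forces case (ii) to hold, namely
\[
(X\ln f)=\eta(X)\qquad\text{for every }X\in\Gamma(TM_1).
\]

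Next I would specialize this identity to the structure vector field. By hypothesis $\xi\in\Gamma(TM_1)$, so the relation may legitimately be evaluated at $X=\xi$, yielding $(\xi\ln f)=\eta(\xi)$. Substituting the normalization $\eta(\xi)=1$ from~(\ref{2.2}) then gives $(\xi\ln f)=1$, which is the claim.

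I do not expect a genuine obstacle here, as the statement is a direct corollary of Theorem~4.2. The only step requiring care is the logical passage from \emph{properness} to the conclusion of case (ii): one must notice that $\theta_2\neq\frac{\pi}{2}$ rules out the first alternative and thereby leaves the second as the sole possibility. Once this is secured, evaluation at $\xi$ together with $\eta(\xi)=1$ completes the argument.
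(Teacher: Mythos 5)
Your proof is correct and takes essentially the same route as the paper: the paper's own argument for this corollary re-derives, from equation~(\ref{4.8}) under the mixed totally geodesic hypothesis, exactly the dichotomy of Theorem~4.2 (either $\theta_2=\frac{\pi}{2}$ or $(X\ln f)=\eta(X)$), with properness excluding the first alternative just as you argue. If anything, you make explicit the concluding step the paper leaves tacit, namely setting $X=\xi$ in $(X\ln f)=\eta(X)$ and invoking $\eta(\xi)=1$.
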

\begin{proof}
  If $M$ be mixed totally geodesic then from (\ref{4.8}) we get\\
  $\cos^2\theta_2[(X\ln f)-\eta(X)]g(Z,W)=0$. From which we get either $\cos^2\theta_2=0$ i.e., $\theta_2=\frac{\pi}{2}$
  or $(X\ln f)=\eta(X)$ and hence the theorem is proved.
\end{proof}
\begin{lemma}
Let $M=M_1\times_fM_2$ be a warped product pointwise bi-slant submanifold of a Kenmotsu manifold $\bar{M}$, such that $\xi$ is tangent to
 $M_1$, with slant functions $\theta_1$ and $\theta_2$. Then
 we have
\begin{equation}\label{4.13}
  g(h(X,Y),QZ)=g(h(X,Z),QY),
 \end{equation}
 \begin{eqnarray}\label{4.14}
  &&g(h(Z,W),QX)-g(h(X,Z),QW) \\
   \nonumber  && =(P_1X\ln f)g(Z,W) +[(X\ln f) -\eta(X)]g(Z,P_2W),
 \end{eqnarray}
 \begin{align}\label{4.15}
   &g(h(Z,W),QP_1X)-g(h(P_1X,Z),QW) \\
   \nonumber & =(P_1X\ln f)g(Z,P_2W) -\cos^2\theta_1[(X\ln f)-\eta(X)]g(Z,W),
 \end{align}
 \begin{align}\label{4.16}
    & g(h(Z,P_2W),QX)-g(h(X,Z),QP_2W) \\
   \nonumber &=(P_1X\ln f)g(Z,P_2W) -\cos^2\theta_2[(X\ln f)-\eta(X)]g(Z,W),
 \end{align}
 \begin{align}\label{4.17}
    & g(h(Z,W),QP_1X)-g(h(P_1X,Z),QW)+g(h(X,Z),QP_2W) \\
   \nonumber & -g(h(Z,P_2W),QX)=(\cos^2\theta_2-\cos^2\theta_1)[(X\ln f)-\eta(X)]g(Z,W).
 \end{align}
 for any $X,\ Y\in\Gamma(TM_1)$ and $Z,\ W\in \Gamma(TM_2)$.
\end{lemma}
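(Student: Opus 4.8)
The plan is to treat (\ref{4.13}) as the one genuinely new identity and to obtain (\ref{4.14})--(\ref{4.17}) as algebraic consequences of it together with the already--established relation (\ref{4.9}). Before starting I would record three facts that will be used repeatedly. First, since $P$ is the tangential part of $\phi$, equation (\ref{2.2}) gives the skew--symmetry $g(PZ,W)=-g(Z,PW)$, hence $g(P_2Z,W)=-g(Z,P_2W)$ for $Z,W\in\Gamma(TM_2)$. Second, because the distributions are $\phi$--orthogonal (condition (b) in the definition of a bi-slant submanifold), one checks that $P_2X=0$ and $\phi X=P_1X+QX$ for $X\in\Gamma(TM_1)$, while $P_1Z=0$ and $\phi Z=P_2Z+QZ$ for $Z\in\Gamma(TM_2)$. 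Third, from (\ref{2.4}) any submanifold tangent to $\xi$ satisfies $h(X,\xi)=0$, and comparing $\nabla_Z\xi=Z$ (the tangential part of $\bar{\nabla}_Z\xi=Z$ from (\ref{2.4}), using $\eta(Z)=0$) with $\nabla_Z\xi=(\xi\ln f)Z$ (from (\ref{4.2})) forces $\xi\ln f=1$. This last identity is exactly what will calibrate the constant terms in (\ref{4.15}).

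For (\ref{4.13}) I would start from $g(h(X,Y),QZ)=g(\bar{\nabla}_XY,QZ)$ and write $QZ=\phi Z-P_2Z$. The $P_2Z$--piece drops out because $\nabla_XY\in\Gamma(TM_1)$ is orthogonal to $TM_2$ (the factor $M_1$ is totally geodesic in the warped product), leaving $g(\bar{\nabla}_XY,\phi Z)=-g(\phi\bar{\nabla}_XY,Z)$. Expanding $\phi\bar{\nabla}_XY=\bar{\nabla}_X(\phi Y)-(\bar{\nabla}_X\phi)Y$ via (\ref{2.5}), the two terms coming from $(\bar{\nabla}_X\phi)Y$ vanish when paired with $Z$, since $\eta(Z)=0$ and $g(\phi X,Z)=g(P_1X,Z)=0$. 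Then $\phi Y=P_1Y+QY$; the $P_1Y$--term contributes nothing (again by the total geodesy of $M_1$), and differentiating $g(QY,Z)=0$ together with $\bar{\nabla}_XZ=(X\ln f)Z+h(X,Z)$ from (\ref{4.2}) and (\ref{2.6}) produces precisely $g(h(X,Z),QY)$. This yields (\ref{4.13}).

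The remaining identities are then purely formal. Rearranging (\ref{4.9}) and applying the skew--symmetry $g(P_2Z,W)=-g(Z,P_2W)$ gives (\ref{4.14}) at once. Replacing $X$ by $P_1X$ in (\ref{4.14}), using $\eta(P_1X)=0$ and $P_1^2X=\cos^2\theta_1(-X+\eta(X)\xi)$ from (\ref{3.3}), and invoking $\xi\ln f=1$ to collapse $\eta(X)(\xi\ln f)$ to $\eta(X)$, turns the $g(Z,W)$ coefficient into $-\cos^2\theta_1[(X\ln f)-\eta(X)]$ and produces (\ref{4.15}). Similarly, replacing $W$ by $P_2W$ in (\ref{4.14}) and using $P_2^2W=-\cos^2\theta_2W$ (again (\ref{3.3}), now with $\eta(W)=0$) gives (\ref{4.16}), and finally (\ref{4.17}) is obtained by subtracting (\ref{4.16}) from (\ref{4.15}). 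I expect the only real difficulty to be bookkeeping in the proof of (\ref{4.13}) --- tracking which tangential components survive after each projection --- together with the point that $\xi\ln f=1$ is exactly what makes the substitution $X\mapsto P_1X$ reproduce the stated $\cos^2\theta_1$ coefficient rather than a spurious $\xi\ln f$ factor.
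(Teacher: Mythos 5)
Your proposal is correct and follows essentially the same route as the paper: the same Gauss--Weingarten computation with (2.5), (2.12) and (4.2) establishes (4.13), your rearrangement of (4.9) via the skew-symmetry $g(P_2Z,W)=-g(Z,P_2W)$ gives exactly (4.14) (the paper re-derives this directly; its intermediate equation (4.19) carries a harmless sign slip in the term $g(\bar{\nabla}_ZX,P_2W)$, which your route avoids), and the substitutions $X\mapsto P_1X$, $W\mapsto P_2W$ followed by subtraction yield (4.15)--(4.17) just as in the paper. A minor improvement: you make explicit the facts $\xi\ln f=1$ and $\eta(P_1X)=0$, which the paper uses tacitly when applying (3.3) after the substitution $X\mapsto P_1X$.
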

\begin{proof}
 For any $X,\ Y\in\Gamma(TM_1)$ and $Z\in \Gamma(TM_2)$, we have
 \begin{eqnarray*}
 g(h(X,Y),QZ)&=& g(\bar{\nabla}_XY,\phi Z)-g(\bar{\nabla}_XY,P_2Z) \\
    &=&-g(\bar{\nabla}_X\phi Y,Z)+g((\bar{\nabla}_X\phi)Y,Z)+g(\bar{\nabla}_XP_2Z,Y).
 \end{eqnarray*}
 By virtue of (\ref{2.5}), (\ref{3.2}) yields
 \begin{eqnarray*}
   g(h(X,Y),QZ) &=& -g(\bar{\nabla}_XP_1Y,Z)-g(\bar{\nabla}_XQY,Z)-\eta(Y)g(\phi X,Z) \\
   &&+g(\bar{\nabla}_XP_2Z,Y)\\
   &=&-g(\bar{\nabla}_XP_1Y,Z)+g(h(X,Z),QY)-\eta(Y)g(\phi X,Z) \\
   &&+g(\bar{\nabla}_XP_2Z,Y)
 \end{eqnarray*}
 Using (\ref{4.2}) in the above equation, we get
 \begin{eqnarray}
 \label{4.18}g(h(X,Y),QZ) &=& (X\ln f)g(P_1Y,Z)+g(h(X,Z),QY) \\
   \nonumber && -\eta(Y)g(P_1X,Z)+(X\ln f)g(Y,P_2Z),
 \end{eqnarray}
 from which the relation (\ref{4.13}) follows. \\
Also  for any $X\in\Gamma(TM_1)$ and $Z,\ W\in \Gamma(TM_2)$, we have
\begin{eqnarray*}
  g(h(Z,W),QX) &=& g(\bar{\nabla}_ZW,\phi X)-g(\bar{\nabla}_ZW,P_1X) \\
   &=& -g(\bar{\nabla}_Z\phi W,X)+g((\bar{\nabla}_Z\phi)W,X)+g(\bar{\nabla}_ZP_1X,W)
\end{eqnarray*}
By virtue of (\ref{2.5}) and (\ref{3.2}), the above equation yields
\begin{eqnarray}
  \label{4.19}g(h(Z,W),QX) &=& -g(\bar{\nabla}_ZX,P_2W)-g(\bar{\nabla}_ZQW,X) \\
\nonumber   &&+\eta(X)g(\phi Z,W)+g(\bar{\nabla}_ZP_1X,W).
\end{eqnarray}
Using (\ref{2.7}) and (\ref{4.2}) in (\ref{4.19}), we get (\ref{4.14}). The relations (\ref{4.15}) and (\ref{4.16}) can be derived from
(\ref{4.14}) by replacing $X$ by $P_1X$ and $W$ by $P_2W$, respectively. Subtracting (\ref{4.16}) from (\ref{4.15}), we get (\ref{4.17}).
\end{proof}
Now, interchanging $W$ by $P_2W$ in (\ref{4.15}), we can derive
\begin{align}\label{4.20}
   &g(h(Z,P_2W),QP_1X)-g(h(P_1X,Z),QP_2W)  \\
  \nonumber & = -\cos^2\theta_2 (P_1X \ln f)g(Z,W)-\cos^2\theta_1\bigg\{(X\ln f)-\eta(X)\bigg\}g(Z,P_2W).
\end{align}
If we interchange $Z$ by $P_2Z$ in (\ref{4.14}) and (\ref{4.15}) then, we obtain
\begin{align}\label{4.21}
   & g(h(P_2Z,W),QX)-g(h(X,P_2Z),QW)\\
  \nonumber & =(P_1X \ln f)g(P_2Z,W) +\cos^2\theta_2[(X\ln f)-\eta(X)]g(Z,W)
\end{align}
and
\begin{align}\label{4.22}
   & g(h(P_2Z,W),QP_1X)-g(h(P_1X,P_2Z),QW) \\
  \nonumber & =\cos^2\theta_2(P_1X \ln f)g(Z,W) -\cos^2\theta_1[(X\ln f)-\eta(X)]g(P_2Z,W).
\end{align}
Interchanging $W$ by $P_2W$ in (\ref{4.21}) and (\ref{4.22}), we get
\begin{align}\label{4.23}
   & g(h(P_2Z,P_2W),QX)-g(h(X,P_2Z),QP_2W)  \\
  \nonumber & \cos^2\theta_2(P_1X \ln f)g(Z,W) -\cos^2\theta_2[(X\ln f)-\eta(X)]g(P_2Z,W).
\end{align}
and
\begin{eqnarray}\label{4.24}
   \qquad&& g(h(P_2Z,P_2W),QP_1X)-g(h(P_1X,P_2Z),QP_2W) \\
  \nonumber&& =\cos^2\theta_2(P_1X \ln f)g(Z,P_2W) -\cos^2\theta_1\cos^2\theta_2[(X\ln f)-\eta(X)]g(Z,W).
\end{eqnarray}
\section{characterization of warped product pointwise bi-slant submanifold}
In this section we found the characterization for a warped product pointwise bi-slant submanifold of Kenmotsu manifold.
\begin{theorem}
Let $M$ be a proper pointwise bi-slant submanifold of a Kenmotsu manifold $\bar{M}$ with pointwise slant distributions
$\mathcal{D}_1\oplus\{\xi\}$ and $\mathcal{D}_2$ . Then $M$ is locally a wraprd product submanifold of the form $M_1\times_fM_2$, where $M_1$
and $M_2$ are pointwise slant submanifolds with distinct slant functions $\theta_1$, $\theta_2$, if and only if the shape operator of $M$ satisfies
\begin{eqnarray}
  \label{5.1} A_{QP_1X}Z-A_{QZ}P_1X+A_{QP_2Z}X -A_{QX}P_2Z\\
 \nonumber =(\cos^2\theta_2-\cos^2\theta_1)[(X\mu)-\eta(X)] Z
\end{eqnarray}
for any $X\in\Gamma(\mathcal{D}_1\oplus\{\xi\})$ and $Z\in \Gamma(\mathcal{D}_2)$ and for some function $\mu$ on $M$ satisfying $(W\mu)=0$, for
any $W\in \Gamma(\mathcal{D}_2)$.
\end{theorem}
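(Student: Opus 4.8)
The statement is an equivalence, so the plan is to treat the two implications separately, using Hiepko's Theorem (Theorem 2.2) as the bridge in the ``if'' direction and the already-established identity (\ref{4.17}) of Lemma 4.3 in the ``only if'' direction. Throughout I abbreviate the left-hand vector of (\ref{5.1}) as $V:=A_{QP_1X}Z-A_{QZ}P_1X+A_{QP_2Z}X-A_{QX}P_2Z$ and write $\Delta:=\cos^2\theta_2-\cos^2\theta_1$, which is nonzero since distinct slant functions $\theta_1\neq\theta_2$ (angles in $[0,\frac{\pi}{2}]$) force $\cos^2\theta_1\neq\cos^2\theta_2$.

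For necessity, suppose $M=M_1\times_fM_2$ and set $\mu=\ln f$. Because $f$ is a function on $M_1$ and $TM_2=\mathcal{D}_2$, we have $(W\mu)=0$ for every $W\in\Gamma(\mathcal{D}_2)$, so $\mu$ is admissible. I verify (\ref{5.1}) componentwise as an equality of tangent vector fields. The component of $V$ in $\mathcal{D}_1\oplus\{\xi\}$ is controlled by (\ref{3.4}): in a warped product the leaves $M_1$ are totally geodesic, so $g(\nabla_XY,Z)=0$ and the right-hand side of (\ref{3.4}) vanishes, which says precisely $g(V,Y)=0$ for all $Y\in\Gamma(\mathcal{D}_1\oplus\{\xi\})$; hence $V\in\Gamma(\mathcal{D}_2)$. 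For the $\mathcal{D}_2$-component I pass to the second fundamental form via $g(A_NU,U')=g(h(U,U'),N)$ and find that $g(V,W)$ coincides with the left-hand side of (\ref{4.17}) after interchanging $Z$ and $W$. Applying (\ref{4.17}) then gives $g(V,W)=\Delta\,[(X\ln f)-\eta(X)]\,g(Z,W)$, and since $V\in\Gamma(\mathcal{D}_2)$ this pins down $V=\Delta\,[(X\mu)-\eta(X)]\,Z$, which is (\ref{5.1}).

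For sufficiency, assume (\ref{5.1}) holds for some $\mu$ with $(W\mu)=0$ on $\mathcal{D}_2$; the aim is to verify the hypotheses of Hiepko's Theorem. Pairing (\ref{5.1}) with an arbitrary $Y\in\Gamma(\mathcal{D}_1\oplus\{\xi\})$ kills the right-hand side by orthogonality of $Z$ with $Y$, and the left-hand side is exactly the expression appearing in (\ref{3.6}); hence Corollary 3.1 gives that $\mathcal{D}_1\oplus\{\xi\}$ is a totally geodesic foliation, with leaves $M_1$. Pairing (\ref{5.1}) instead with $W\in\Gamma(\mathcal{D}_2)$, converting shape operators into $h$, and comparing with (\ref{3.5}) produces a formula for $g(\nabla_ZW,X)$; using the Kenmotsu relation $\bar{\nabla}_X\xi=X-\eta(X)\xi$ to absorb the $\xi$-direction, this reduces to $g(\nabla_ZW,X)=-(X\mu)\,g(Z,W)$, i.e. the leaves of $\mathcal{D}_2$ are totally umbilical in $M$ with mean curvature vector $\mathbf{H}=-\boldsymbol{\nabla}\mu$. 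Finally, $(W\mu)=0$ on $\mathcal{D}_2$ forces $\boldsymbol{\nabla}\mu\in\Gamma(\mathcal{D}_1\oplus\{\xi\})$, so $\mathbf{H}$ is a genuine section of the normal bundle of the leaves; differentiating $g(\mathbf{H},X)$ along $\mathcal{D}_2$ and using the total geodesy of $\mathcal{D}_1\oplus\{\xi\}$ shows the normal component of $\nabla_Z\mathbf{H}$ vanishes, so $\mathcal{D}_2$ is a spherical foliation. Hiepko's Theorem then makes $M$ locally isometric to $M_1\times_fM_2$ with $\ln f=\mu$, and the induced slant functions are the distinct $\theta_1,\theta_2$.

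I expect the routine parts to be the passage between the shape-operator and second-fundamental-form formulations and the algebraic matching with (\ref{3.5}) and (\ref{4.17}). The real difficulty is twofold and lives in the sufficiency direction: first, the careful treatment of the $\xi$-direction, where $\bar{\nabla}_X\xi=X-\eta(X)\xi$ injects the extra $\eta(X)$ terms and explains why the correct object is $(X\mu)-\eta(X)$ rather than $X\mu$; and second, promoting ``umbilical'' to ``spherical'', since Hiepko's Theorem needs $\mathcal{D}_2$ to be a spherical foliation, not merely an umbilical one. The spherical (extrinsic-sphere) condition is where the hypothesis $(W\mu)=0$ must be fully exploited, and I anticipate it to be the main obstacle.
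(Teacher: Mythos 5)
Your proposal is correct and follows essentially the same route as the paper: for necessity the paper derives the vanishing of the tangential pairing from (4.13) via its relations (5.2)--(5.5) and then invokes Lemma 4.3 (the identity (4.17)) for the $\mathcal{D}_2$-component, which is exactly your two-component computation (your use of (3.4) together with total geodesy of the $M_1$-leaves is just a tidier way of killing the $\mathcal{D}_1\oplus\{\xi\}$-component). For sufficiency you reproduce the paper's argument step for step: feeding (5.1) into (3.4) yields a totally geodesic foliation of $\mathcal{D}_1\oplus\{\xi\}$, feeding it into (3.5) yields $g(\nabla_ZW,X)=-(X\mu)g(Z,W)$, so the leaves of $\mathcal{D}_2$ are totally umbilical with mean curvature vector $-\boldsymbol{\nabla}\mu$, the hypothesis $(W\mu)=0$ upgrades umbilicity to the extrinsic-sphere (parallel mean curvature) condition, and Hiepko's theorem with $\ln f=\mu$ concludes, just as in the paper.
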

\begin{proof}
Let $M=M_1\times_fM_2$ be a warped product submanifold of $\bar{M}$. Then for any $X,\ Y\in\Gamma(TM_1)$ and $Z\in \Gamma(TM_2)$, we have
from (\ref{4.13}) that
\begin{equation}\label{5.2}
g(A_{QY}Z-A_{QZ}Y,X)=0.
\end{equation}
Interchanging $Y$ by $P_1Y$ in (\ref{5.2}), we obtain
\begin{equation}\label{5.3}
g(A_{QP_1Y}Z-A_{QZ}P_1Y,X)=0
\end{equation}
Again interchanging $Z$ by $P_2Z$, (\ref{5.2}) yields
\begin{equation}\label{5.4}
g(A_{QY}P_2Z-A_{QP_2Z}Y,X)=0.
\end{equation}
Subtracting (\ref{5.4}) from (\ref{5.3}), we get
\begin{eqnarray}
   \label{5.5} g(A_{QP_1X}Z-A_{QZ}P_1X+A_{QP_2Z}X -A_{QX}P_2Z,X)=0.
\end{eqnarray}
From (\ref{4.16}) and (\ref{5.5}), we get (\ref{5.1})\\
Conversely, let $M$ be a proper pointwise bi-slant subamnifold of $\bar{M}$. Then for any $X,\ Y\in \Gamma(\mathcal{D}_1\oplus\{\xi\})$
and $Z\in \Gamma(\mathcal{D}_2)$, we have from (\ref{3.4}) and (\ref{5.1}) that
\begin{eqnarray}
  \label{5.6}(\sin^2\theta_1-\sin^2\theta_)g(\nabla_XY,Z) = \\
\nonumber  (\cos^2\theta_2-\cos^2\theta_1)[(X\mu)-\eta(X)]g(X,Z) =0.
\end{eqnarray}
Since $\theta_1\neq\theta_2$ , the leaves of the distribution $\mathcal{D}_1\oplus\{\xi\}$ are totally geodesic in $M$.
Also, for any $X\in \Gamma(\mathcal{D}_1\oplus\{\xi\})$
and $Z,\ W\in \Gamma(\mathcal{D}_2)$, we have from (\ref{3.5}) and (\ref{5.1}) that
\begin{eqnarray}
 \label{5.7} (\sin^2\theta_2-\sin^2\theta_1)g(\nabla_ZW,X) = (\cos^2\theta_2-\cos^2\theta_1)\\
\nonumber \{(X\mu)-\eta(X)\}g(Z,W)-\eta(X)g(Z,W)
\end{eqnarray}
Using trigonometric identities on (\ref{5.7}), we get
\begin{equation}\label{5.8}
g(\nabla_ZW,X)=-(X\mu)g(Z,W).
\end{equation}
By polarization, we find
\begin{equation}\label{5.9}
g(\nabla_WZ,X)=-(X\mu)g(Z,W).
\end{equation}
 From (\ref{5.8}) and (\ref{5.9}), we obtain $g([Z,W],X)=0$ and hence the distribution $\mathcal{D}_2$ is integrable.
We consider a leaf $M_2$ of $\mathcal{D}_2$ and $h_2$ be the second fundamental form of $M_2$ in $M$. Then from (\ref{5.8}), we obtain
\begin{equation}\label{5.10}
g(h_2(Z,W),X)=g(\nabla_ZW,X)=-(X\mu)g(Z,W).
\end{equation}
Therefore, we get $h_2(Z,W)=-\boldsymbol{\nabla}\mu g(Z,W)$, where $\boldsymbol{\nabla}\mu$ is the gradient of $\mu$ and hence the leaf
 $M_2$ is totally umbilical in $M$ with mean curvature vector $H_2=-\boldsymbol{\nabla}\mu$. Since $W(\mu)=0$ for any
 $W\in \Gamma(\mathcal{D}_2)$, we can easily obtain $H_2$ is parallel corresponding to the normal connection $D^{\theta_2}$ of $M_2$ in $M$ \cite{UDD1}.
 Thus $M_2$ is an extrinsic sphere in $M$. Therefore by Theorem 2.2, we conclude that $M$ is locally a warped product submanifold.
 Thus the proof is complete.
\end{proof}
\section{An inequality for warped product pointwise bi-slant submanifold}
In this section, we establish a general sharp geometric inequality for a mixed totally geodesic proper pointwise bi-slant warped product
 submanifold of the form $M_1\times_fM_2$ such that $\xi\in\Gamma(TM_1)$ of a Kenmotsu manifold $\bar{M}$.
 \indent Let $M_1\times_fM_2$ be a warped product mixed totally geodesic proper pointwise bi-slant submanifold of a Kenmotsu manifold $\bar{M}$
 such that $\xi\in\Gamma(TM_1)$.
  Put $dim \bar{M}=2m+1$, $\dim M_1=2p+1$, $dim M_2=2q$ and $dim M=n=2p+2q+1$. Let $\mathcal{D}_1$ and $\mathcal{D}_2$ be the tangent bundles of $M_1$
  and $M_2$, respectively. Assume that \\
  $\{e_1,\cdots,e_p,e_{p+1}=\sec\theta_1P_1e_1,\cdots,e_{2p}=\sec \theta_1P_1e_p,e_{2p+1}=\xi\}$  is a local orthonormal frame of $\mathcal{D}_1$,\\
  $\{e_{2p+2}=e_1^*,\cdots,e_{2p+q+1}=e_q^*,e_{2p+q+2}=e_{q+1}^*=\sec\theta_2P_2e_1^*,\cdots,e_{2p+2q+1}=e_{2q}^*=\sec\theta_2P_2e_q^*\}$ is a local orthonormal frame of $\mathcal{D}_2$. Then\\ $\{e_{n+1}=\hat{e}_1=\csc\theta_1Qe_1,\cdots,e_{n+p}=\hat{e}_p=\csc\theta_1Qe_p,\\e_{n+p+1}=\hat{e}_{p+1}=\csc\theta_1\sec\theta_1QP_1e_1,\cdots,
e_{n+2p}=\hat{e}_{2p}=\csc\theta_1\sec\theta_1QP_1e_p\}$,\\ $\{e_{n+2p+1}=\tilde{e}_1=\csc\theta_2Qe^*_1,\cdots,
e_{n+2p+q}=\tilde{e}_q=\csc\theta_2Qe^*_q,e_{n+2p+q+1}=\tilde{e}_{q+1}
=\csc\theta_2\sec\theta_2QP_2e^*_1,\cdots,e_{2n-1}=\tilde{e}_{2q}=\csc\theta_2\sec\theta_2QP_2e^*_1\}$ and $\{e_{2n},\cdots,
e_{2m+1}\}$ are local orthonormal frames of $Q\mathcal{D}_1, \ Q\mathcal{D}_2$ and $\nu$ respectively.
\begin{theorem}
Let $M_1\times_fM_2$ be a warped product mixed totally geodesic proper pointwise bi-slant submanifold of a Kenmotsu manifold $\bar{M}$.
Then we have\\
(i) The squared norm of the second fundamental form $h$ of $M$ satisfies
\begin{eqnarray}\label{6.1}
 \qquad \|h\|^2 \geq 2q\csc^2\theta_1\bigg(\cos^2\theta_1+\cos^2\theta_2\bigg)\bigg\{\|\boldsymbol{\nabla}^{\theta_1}lnf\|^2-1-\sum_{r=1}^{p}(e_rlnf)^2\bigg\},
\end{eqnarray}
where $\boldsymbol{\nabla}^{\theta_1}\ln f$ denotes the gradient of $\ln f$ along $M_1$, $2q=$ dim $M_2$ and $\theta_1,\ \theta_2$ are the slant angles of $M_1$
and $M_2$, respectively.\\
(ii) If the equality sign of (\ref{6.1}) holds, then $M_1$ is totally geodesic and $M_2$ is totally umbilical in $\bar{M}$.
\end{theorem}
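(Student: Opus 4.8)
The plan is to bound $\|h\|^2$ from below by retaining only those components of $h$ that the identities of Section 4 evaluate in closed form and discarding the rest as non-negative. Expanding $\|h\|^2=\sum_{i,j=1}^{n}\sum_{r=n+1}^{2m+1}(h_{ij}^r)^2$ and splitting the tangential indices into the $\mathcal{D}_1$-$\mathcal{D}_1$, $\mathcal{D}_1$-$\mathcal{D}_2$ and $\mathcal{D}_2$-$\mathcal{D}_2$ blocks, the mixed totally geodesic hypothesis $h(X,Z)=0$ annihilates the entire $\mathcal{D}_1$-$\mathcal{D}_2$ block. Keeping only the $\mathcal{D}_2$-$\mathcal{D}_2$ components measured against the $Q\mathcal{D}_1$ part of the normal bundle, I obtain
\begin{equation*}
\|h\|^2\ \geq\ \sum_{a,b=1}^{2q}\sum_{k=1}^{p}\Big[g(h(e_a^*,e_b^*),\hat e_k)^2+g(h(e_a^*,e_b^*),\hat e_{p+k})^2\Big].
\end{equation*}

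To evaluate the two inner products I would invoke (\ref{4.14}) and (\ref{4.15}). Since $h(X,Z)=0$ for $X\in\Gamma(TM_1)$, $Z\in\Gamma(TM_2)$ and $\eta(e_k)=0$, these collapse to
\begin{align*}
g(h(Z,W),Qe_k)&=(P_1e_k\ln f)\,g(Z,W)+(e_k\ln f)\,g(Z,P_2W),\\
g(h(Z,W),QP_1e_k)&=(P_1e_k\ln f)\,g(Z,P_2W)-\cos^2\theta_1\,(e_k\ln f)\,g(Z,W).
\end{align*}
Writing $\alpha_k=e_k\ln f$ and $\beta_k=(P_1e_k)\ln f$ and substituting $\hat e_k=\csc\theta_1\,Qe_k$, $\hat e_{p+k}=\csc\theta_1\sec\theta_1\,QP_1e_k$, each squared term becomes a quadratic form in $g(e_a^*,e_b^*)$ and $g(e_a^*,P_2e_b^*)$.

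The summation step is governed by the algebra of $P_2$, which is skew-symmetric on $\mathcal{D}_2$ with $P_2^2=-\cos^2\theta_2\,I$ there. Consequently $\sum_{a,b}g(e_a^*,e_b^*)^2=2q$, $\sum_{a,b}g(e_a^*,P_2e_b^*)^2=-\operatorname{tr}(P_2^2)=2q\cos^2\theta_2$, and the decisive cross sum $\sum_{a,b}g(e_a^*,e_b^*)\,g(e_a^*,P_2e_b^*)=\operatorname{tr}P_2=0$. The vanishing cross term decouples the $\alpha_k^2$ and $\beta_k^2$ contributions, and after using $1+\sec^2\theta_1\cos^2\theta_2=\sec^2\theta_1(\cos^2\theta_1+\cos^2\theta_2)$ this gives
\begin{equation*}
\|h\|^2\ \geq\ 2q\csc^2\theta_1(\cos^2\theta_1+\cos^2\theta_2)\sum_{k=1}^{p}\big(\alpha_k^2+\sec^2\theta_1\beta_k^2\big).
\end{equation*}
Dropping the non-negative term $\sum_k\alpha_k^2$ and identifying $\sec^2\theta_1\sum_k\beta_k^2=\sum_k(e_{p+k}\ln f)^2=\|\boldsymbol{\nabla}^{\theta_1}\ln f\|^2-1-\sum_{r=1}^{p}(e_r\ln f)^2$, where the $-1$ equals $(\xi\ln f)^2$ by Corollary \ref{4.1} above, yields (\ref{6.1}).

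For part (ii), equality forces every discarded component to vanish and $\sum_k\alpha_k^2=0$. Vanishing of the full $\mathcal{D}_1$-$\mathcal{D}_1$ block gives $h(X,Y)=0$ for all $X,Y\in\Gamma(TM_1)$; since the leaves of $\mathcal{D}_1\oplus\{\xi\}$ are totally geodesic in $M$, this makes $M_1$ totally geodesic in $\bar M$. The surviving $\mathcal{D}_2$-$\mathcal{D}_2$ values lie in $Q\mathcal{D}_1$, and with $\alpha_k=0$ their symmetry in $Z,W$ against the antisymmetric factor $g(Z,P_2W)$ forces $h(Z,W)$ to reduce to a multiple of $g(Z,W)$; combined with the warped-product umbilicity of $M_2$ in $M$ (mean curvature $-\boldsymbol{\nabla}\mu$ obtained in Section 5) this yields that $M_2$ is totally umbilical in $\bar M$. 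I expect the trace bookkeeping of the middle step to be the main obstacle: the precise coefficient $\csc^2\theta_1(\cos^2\theta_1+\cos^2\theta_2)$ emerges only because $\operatorname{tr}P_2$ vanishes and the two slant factors recombine exactly as above.
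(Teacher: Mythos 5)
Your proposal is correct and follows essentially the same route as the paper's proof: the same block decomposition of $\|h\|^2$ with the mixed totally geodesic hypothesis annihilating the cross block, the same evaluation of the $\mathcal{D}_2$--$\mathcal{D}_2$ components against the $Q\mathcal{D}_1$ frame via (\ref{4.14})--(\ref{4.15}) and their $P_1$-, $P_2$-shifted variants, the same use of $(\xi\ln f)=1$ to reach (\ref{6.1}), and the same equality-case analysis. Your trace identities for $P_2$ merely repackage the paper's adapted half-frame expansion into eight sums in (\ref{6.8})--(\ref{6.9}), and your intermediate sharper bound containing $\sum_k\alpha_k^2$ makes explicit the nonnegative contribution that the paper drops silently in passing to (\ref{6.9}).
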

\begin{proof}
From the definition of $h$, we have
\begin{equation*}
\|h\|^2=\sum_{i,j=1}^{n}g(h(e_i,e_j),h(e_i,e_j))=\sum_{r=n+1}^{2m+1}\sum_{i,j=1}^{n}g(h(e_i,e_j),e_r)^2.
\end{equation*}
Now we decompose the above relation for the frames of $\mathcal{D}_1$ and $\mathcal{D}_2$, as follows
\begin{eqnarray}\label{6.2}
  \|h\|^2 &=& \sum_{r=n+1}^{2m+1}\sum_{i,j=1}^{2p+1}g(h(e_i,e_j),e_r)^2+2\sum_{r=n+1}^{2m+1}\sum_{i=1}^{2p+1}\sum_{j=1}^{2q}g(h(e_i,e^*_j),e_r)^2 \\
  \nonumber && +\sum_{r=n+1}^{2m}\sum_{i,j=1}^{2q}g(h(e^*_i,e^*_j),e_r)^2
\end{eqnarray}
Since $M$ is mixed totally geodesic, the second term in the right hand side of (\ref{6.2}) is zero and decomposing the remaining terms according to (\ref{3.3a}), we obtain
\begin{eqnarray}\label{6.3}
  \|h\|^2 &=& \sum_{r=n+1}^{n+2q}\sum_{i,j=1}^{2p+1}g(h(e_i,e_j),e_r)^2+\sum_{r=n+2q+1}^{2n-1}\sum_{i,j=1}^{2p+1}g(h(e_i,e_j),e_r)^2 \\
 \nonumber  && +\sum_{r=2n}^{2m+1}\sum_{i,j=1}^{2p+1}g(h(e_i,e_j),e_r)^2+\sum_{r=n+1}^{n+2p}\sum_{i,j=1}^{2q}g(h(e^*_i,e^*_j),e_r)^2  \\
  \nonumber &&+\sum_{r=n+2p+1}^{2n-1}\sum_{i,j=1}^{2q}g(h(e^*_i,e^*_j),e_r)^2+\sum_{r=2n}^{2m+1}\sum_{i,j=1}^{2q}g(h(e^*_i,e^*_j),e_r)^2.
\end{eqnarray}
Next by removing the $\nu$ components in (\ref{6.3}), we have
\begin{eqnarray}\label{6.4}
  \|h\|^2\geq  && \sum_{r=1}^{2q}\sum_{i,j=1}^{2p+1}g(h(e_i,e_j),\widetilde{e}_r)^2+\sum_{r=1}^{2p}\sum_{i,j=1}^{2p+1}g(h(e_i,e_j),\hat{e}_r)^2 \\
  \nonumber &&+ \sum_{r=1}^{2p}\sum_{i,j=1}^{2q}g(h(e^*_i,e^*_j),\widetilde{e}_r)^2+\sum_{r=1}^{2q}\sum_{i,j=1}^{2q}g(h(e^*_i,e^*_j),\hat{e}_r).
\end{eqnarray}
Since we could not find any relation for $g(h(e_i,e_j),\hat{e}_r)$ for any $i,j=1,2,\cdots,2p+1$, $r=1,2,\cdots,2p$. And $g(h(e^*_i,e^*_j),\widetilde{e}_r)$, for any $i,j,r=1,2,\cdots,2q$ therefore, we leave the $2$nd and $4$th terms and get
\begin{equation}\label{6.5}
 \|h\|^2\geq   \sum_{r=1}^{2q}\sum_{i,j=1}^{2p+1}g(h(e_i,e_j),\widetilde{e}_r)^2+\sum_{r=1}^{2p}\sum_{i,j=1}^{2q}g(h(e^*_i,e^*_j),\hat{e}_r)^2.
\end{equation}
In view of (\ref{4.13}), (\ref{6.5}) yields
\begin{equation}\label{6.6}
  \|h\|^2\geq  \sum_{r=1}^{2q}\sum_{i,j=1}^{2p+1}g(h(e_i,e^*_j),\tilde{e}_r)^2+\sum_{r=1}^{2p}\sum_{i,j=1}^{2q}g(h(e^*_i,e^*_j),\hat{e}_r)^2.
\end{equation}
Since $M$ is mixed totally geodesic, we have
\begin{equation}\label{6.6a}
g(h(e_i,e_j),e_r)=0,
\end{equation}
 for every $i,\ j=1,\cdots,2p+1,\ j=n+1,\cdots,2q$.\\
 By virtue of (\ref{6.6a}),we have from (\ref{6.6}) that
\begin{equation}\label{6.7}
 \|h\|^2\geq  \sum_{r=1}^{2p}\sum_{i,j=1}^{2q}g(h(e^*_i,e^*_j),\hat{e}_r)^2.
\end{equation}
Thus, by using the orthonormal frame fields of $Q\mathcal{D}_1$ and $Q\mathcal{D}_2$, we derive
\begin{eqnarray}
\label{6.8}
  &&\|h\|^2 \geq \csc^2\theta_1\sum_{r=1}^{p}\sum_{i,j=1}^{q}g(h(e^*_i,e^*_j),Qe_r)^2\\
\nonumber&&+\csc^2\theta_1\sec^2{\theta}_2\sum_{r=1}^{p}\sum_{i,j=1}^{q}g(h(P_2e^*_i,e^*_j),Qe_r)^2 \\
\nonumber&& +\csc^2\theta_1\sec^2{\theta}_2\sum_{r=1}^{p}\sum_{i,j=1}^{q}g(h(e^*_i,P_2e^*_j),Qe_r)^2 \\
\nonumber&& +\csc^2\theta_1 \sec^4{\theta}_2\sum_{r=1}^{p}\sum_{i,j=1}^{q}g(h(P_2e^*_i,P_2e^*_j),Qe_r)^2\\
\nonumber&& + \csc^2\theta_1 \sec^2{\theta}_1\sum_{r=1}^{p}\sum_{i,j=1}^{q}g(h(e^*_i,e^*_j),QP_1e_r)^2 \\
\nonumber&& +\csc^2\theta_1\sec^2{\theta}_1\sec^2{\theta}_2\sum_{r=1}^{p}\sum_{i,j=1}^{q}g(h(P_2e^*_i,e^*_j),QP_1e_r)^2\\
\nonumber&&+\csc^2\theta_1\sec^2{\theta}_1\sec^2{\theta}_2\sum_{r=1}^{p}\sum_{i,j=1}^{q}g(h(Pe^*_i,P_2e^*_j),QP_1e_r)^2\\
\nonumber&& +\csc^2\theta_1\sec^2{\theta}_1\sec^4{\theta}_2\sum_{r=1}^{p}\sum_{i,j=1}^{q}g(h(P_2e^*_i,P_2e^*_j),QP_1e_r)^2.
\end{eqnarray}
Using (\ref{4.14})-(\ref{4.16}), (\ref{4.20})-(\ref{4.24}) and (\ref{6.6a}) in (\ref{6.8}), we get
\begin{eqnarray}\label{6.9}
  \|h\|^2\geq &&\csc^2\theta_1\sum_{r=1}^{p}\sum_{i,j=1}^{q}(P_1e_rlnf)^2g(e^*_i,e^*_j)^2 \\
  \nonumber && +\csc^2\theta_1\sum_{r=1}^{p}\sum_{i,j=1}^{q}(P_1e_rlnf)^2g(e^*_i,e^*_j)^2 \\
 \nonumber  &&+\csc^2\theta_1\sec^2\theta_1\cos^2\theta_2\sum_{r=1}^{p}\sum_{i,j=1}^{q}(P_1e_rlnf)^2g(e^*_i,e^*_j)^2  \\
 \nonumber  &&+\csc^2\theta_1\sec^2\theta_1\cos^2\theta_2\sum_{r=1}^{p}\sum_{i,j=1}^{q}(P_1e_rlnf)^2g(e^*_i,e^*_j)^2  \\
 \nonumber  &=&2q\csc^2\theta_1[1+\sec^2\theta_1\cos^2\theta_2]\sum_{r=1}^{p}(P_1e_rlnf)^2.
\end{eqnarray}
Now,
\begin{eqnarray*}
  \|\boldsymbol{\nabla}^{\theta_1}\ln f\|^2 &=& \sum_{r=1}^{2p+1}(e_rlnf)^2 \\
  \nonumber &=& \sum_{r=1}^{p}(e_rlnf)^2+\sum_{r=1}^{p}(\sec\theta_1 P_1e_rlnf)^2+(\xi lnf)^2 \\
  \nonumber &=& \sum_{r=1}^{p}(e_rlnf)^2+\sec^2\theta_1\sum_{r=1}^{p}(P_1e_rlnf)^2+1.\\
  \nonumber\ \ \text{as}\ \ (\xi ln f)=1
\end{eqnarray*}
From the above relation, we get
\begin{equation}\label{6.10}
  \sum_{r=1}^{p}(P_1e_rlnf)^2=\cos^2\theta_1\left[\|\boldsymbol{\nabla}^{\theta_1}lnf\|^2-1-\sum_{r=1}^{p}(e_rlnf)^2\right].
\end{equation}\label{6.12}
Using (\ref{6.10}) in (\ref{6.9}), we get (\ref{6.1}).\\ If equality of (\ref{6.1}) holds, then from the leaving third term of (\ref{6.3}),
we get $g(h(\mathcal{D}_1,\mathcal{D}_1),\nu)=0$, which implies that $h(\mathcal{D}_1,\mathcal{D}_1)\bot\nu$, i.e.,
\begin{equation}\label{6.11}
 h(\mathcal{D}_1,\mathcal{D}_1)\subset Q\mathcal{D}_1\oplus Q\mathcal{D}_2.
 \end{equation}
 and from the leaving second term in (\ref{6.4}), we find
 $g(h(\mathcal{D}_1,\mathcal{D}_1),Q\mathcal{D}_1)=0$, which implies that $h(\mathcal{D}_1,\mathcal{D}_1)\bot Q\mathcal{D}_1$, i.e.,
 \begin{equation}
 h(\mathcal{D}_1,\mathcal{D}_1)\subset Q\mathcal{D}_2\oplus\nu.
 \end{equation}
 Also, from (\ref{4.13}) and (\ref{6.6a}), we get $h(\mathcal{D}_1,\mathcal{D}_1)\bot Q\mathcal{D}_2$, i.e.,
 \begin{equation}\label{6.13}
 h(\mathcal{D}_1,\mathcal{D}_1)\subset Q\mathcal{D}_1\oplus\nu.
 \end{equation}
 From (\ref{6.11})-(\ref{6.13}), we conclude that
 \begin{equation}\label{6.14}
 h(\mathcal{D}_1,\mathcal{D}_1)=0.
 \end{equation}
  Since $M_1$ is totally geodesic in $M$ (\cite{BISHOP}, \cite{CHENS}), from (\ref{6.14}), we can say that $M_1$ is totally geodesic in $\bar{M}$.\\
  Similarly, for the leaving sixth term in (\ref{6.3}), we get $g(h(\mathcal{D}_2,\mathcal{D}_2),\nu)=0$, which implies that
  $h(\mathcal{D}_2,\mathcal{D}_2)\bot \nu$, i.e.,
  \begin{equation}\label{6.15}
  h(\mathcal{D}_2,\mathcal{D}_2)\subset Q\mathcal{D}_2\oplus \nu.
  \end{equation}
  Also, for the leaving fourth term in (\ref{6.4}), we obtain $g(h(\mathcal{D}_2,\mathcal{D}_2),Q\mathcal{D}_2)=0$, which implies that
  $h(\mathcal{D}_2,\mathcal{D}_2)\bot Q\mathcal{D}_2$ i.e.,
  \begin{equation}\label{6.16}
  h(\mathcal{D}_2,\mathcal{D}_2)\subset Q\mathcal{D}_1.
  \end{equation}
  From (\ref{6.15}) and (\ref{6.16}), we get
  \begin{equation}\label{6.17}
    h(\mathcal{D}_2,\mathcal{D}_2)\subset Q\mathcal{D}_1.
  \end{equation}
  Moreover, using (\ref{6.6a}) in (\ref{6.14}), we obtain
  \begin{equation}\label{6.18}
  g(h(Z,W),QX)=(P_1X\ln f)g(Z,W)+\{(X\ln f)-\eta(X)\}g(Z,P_2W),
  \end{equation}
  for any $X\in \Gamma(TM_1)$ and $Z,\ W\in \Gamma(TM_2)$.
  By Polarization of (\ref{6.18}), we obtain
  \begin{equation}\label{6.19}
  g(h(Z,W),QX)=(P_1X\ln f)g(Z,W)+[(X\ln f)-\eta(X)]g(W,P_2Z).
  \end{equation}
  Subtracting (\ref{6.19}) from (\ref{6.18}), we get
  \begin{equation}\label{6.20}
  g(h(Z,W),QX)=(P_1X\ln f)g(Z,W).
  \end{equation}
  From (\ref{6.17}), (\ref{6.20}) and the fact that $M_2$ is totally umbilical in $M$ (\cite{BISHOP}, \cite{CHENS}), we can conclude that $M_2$ is totally umbilical
  in $\bar{M}$. Thus the proof of the theorem is complete.
\end{proof}
\noindent\textbf{Acknowledgement:} The first two authors (S. K. Hui and J. Roy) gratefully acknowledges to the SERB (Project No: EMR/2015/002302), Govt. of India for financial assistance of the work.

\vspace{0.1in}
\noindent Shyamal Kumar Hui\\
Department of Mathematics, The University of Burdwan,\\
Burdwan, 713104, West Bengal, India.\\
E-mail: skhui@math.buruniv.ac.in.\\

\vspace{0.1in}
\noindent Joydeb Roy\\
Department of Mathematics, The University of Burdwan,\\
Burdwan, 713104, West Bengal, India.\\
E-mail:joydeb.roy8@gmail.com.\\

\vspace{0.1in}
\noindent Tanumoy Pal\\
Department of Mathematics, The University of Burdwan,\\
Burdwan, 713104, West Bengal, India.\\
E-mail: tanumoypalmath@gmail.com.\\

\begin{thebibliography}{1}
\bibitem{ATCE1}
M. Atceken, \emph{Warped product semi-slant submanifolds in Kenmotsu manifolds}, Turk. J. Math.,
{\bf 34} (2010), 425--432.
\bibitem{SOLAMY}
F. R. Al-Solamy and M. A. Khan, \emph{Pseudo-slant warped product submanifolds of Kenmotsu manifolds}, Mathematica Moravica,
{\bf 17} (2013), 51--61.
\bibitem{OTHMAN}
A. Ali, W. A. M. Othman and C. \"{O}zel, \emph{Some ineqalities of warped product pseudo-slant submanifolds of nearly Kenmotsu manifolds}, J. of Inequalities and Appl., (2015).
\bibitem{BEJ}
A. Bejancu, \emph{Geometry of CR-submanifolds}, D. Reidel Publ. Co., Dordrecht, Holland, {\bf 1986}.
\bibitem{BLAIR}
D. E. Blair, \emph{Contact manifolds in Riemannian geometry}, Lecture Notes in Math. {\bf 509}, Springer-Verlag, 1976.
\bibitem{BISHOP}
R. L. Bishop and B. O'Neill, \emph{Manifolds of negative
curvature}, Trans. Amer. Math. Soc., {\bf 145} (1969), 1--49.
\bibitem{CAR1}
J. L. Cabrerizo,  A. Carriazo, L. M. Fernandez and  M. Fernandez,
\emph{Semi-slant submanifolds of a Sasakian manifold}, Geom.
Dedicata, {\bf 78} (1999), 183-199.
\bibitem{CHENS}
B. Y. Chen, \emph{Slant immersions}, Bull. Austral. Math. Soc., {\bf 41} (1990), 135--147.
\bibitem{CHENCR1}
B. Y. Chen, \emph{Geometry of warped product CR-submanifolds in Kaehler manifold}, Monatsh. Math., {\bf 133} (2001),
177-195.
\bibitem{CHENCR2}
B. Y. Chen, \emph{Geometry of warped product CR-submanifolds in Kaehler manifolds II}, Monatsh. Math., {\bf 134} (2001),
103-119.
\bibitem{CHENP}
B. Y. Chen, \emph{Pseudo-Riemannian geometry,  $\delta$-invariants and applications}, World Scientific, Hackensack, NJ, 2011.
\bibitem{CHENBOOK}
B. Y. Chen, \emph{Differential geometry of warped product manifolds and submanifolds}, World Scientific, Hackensack, NJ, 2017.
\bibitem{CHENUD}
B. Y. Chen, S, \emph{Warped product pointwise bi-slant submanifolds of Kaehler manifold}, arXiv:1711.07117v1, [math.DG](2017)
\bibitem{ETAYO}
F. Etayo, \emph{On quasi-slant submanifolds of an almost Hermitian manifold}, Publ. Math. Debrecen, {\bf 53} (1998), 217--223.
\bibitem{HASE}
I. Hasegawa and I. Mihai, \emph{Contact CR-warped product submanifolds in Sasakian manifolds}, Geom. Dedicata,
{\bf 102} (2003), 143-150.
\bibitem{HIPKO}
Hiepko, S., \emph{Eine inner kennzeichungder verzerrten produkte}, Math. Ann. {\bf 241} (1979), 209--215.
\bibitem{HUI1}
S. K. Hui, \emph{On weakly $\phi$-symmetric Kenmotsu manifolds}, Acta Univ.  Palac.  Olom., Fac. Rer. Nat., Math.,  {\bf 51(1)} (2012), 43-50.
\bibitem{HUI2}
S. K. Hui, \emph{On $\phi$-pseudo symmetric Kenmotsu manifolds}, Novi Sad J. Math., {\bf 43(1)} (2013), 89-98.
\bibitem{HUI3}
S. K. Hui, \emph{On $\phi$-pseudo symmetric Kenmotsu manifolds with respect to quarter-symmetric metric connection}, Applied Sciences, {\bf 15} (2013), 71-84.
\bibitem{HAN}
S. K. Hui, M. Atceken and S. Nandy, \emph{Contact CR-warped product submanifolds of $(LCS)_n$-manifolds},
 Acta Math. Univ. Comenianae, {\bf 86} (2017), 101-109.
\bibitem{HAP}
S. K. Hui, M. Atceken and T. Pal, \emph{Warped product pseudo slant submanifolds $(LCS)_n$-manifolds},
 New Trends in Math. Sci., {\bf 5} (2017), 204--212.
\bibitem{HUOM}
S. K. Hui, S. Uddin, C. \"{O}zel and A. A. Mustafa, \emph{Warped product submanifolds of LP-Sasakian manifolds}, Hindwai Publ. Corp.,
Discrete Dynamics in Nature and Society, vol. 2012, doi:10.1155/2012/868549.
\bibitem{KEN}
K. Kenmotsu, \emph{A class of almost contact Riemannian manifolds}, Tohoku Math. J., {\bf 24} (1972), 93-103.
\bibitem{6}
 V. A. Khan and M. A. Khan, \emph{Pseudo-slant Submanifolds of a
Sasakian Manifold}, Indian J. Pure Appl. Math., {\bf 38} (2007), 31--42.
\bibitem{KHAN}
V. A. Khan, M. A. Khan and S. Uddin, \emph{Contact CR-warped product submanifolds of Kenmotsu
manifold}, Thai J. Math., {\bf 6} (2008), 307-314.
\bibitem{KHANS}
V. A. Khan and M. Shuaib, \emph{Pointwise pseudo-slant submanifolds of Kenmotsu manifold}, Filomat, {\bf 31} (2017), 5833--5853.
\bibitem{KHANS1}
V. A. Khan and M. Shuaib, \emph{Some warped product submanifolds of a Kenmotsu manifold}, Bull. Korean Math. Soc.,
{\bf 51} (2014), 863--881.
\bibitem{LOTTA}
 A. Lotta, \emph{Slant submanifolds in contact geometry}, Bull.
Math. Soc. Sci. Math. R. S. Roumanie, {\bf 39} (1996), 183--198.
\bibitem{MIUD}
I. Mihai and S. Uddin, \emph{Warped product pointwise semi-slant submanifolds of Sasakian manifolds}, arXiv:1706.04305v1, [math.DG.]
\bibitem{MURA}
C. Murathan, K. Arslan, R. Ezentas and I. Mihai, \emph{Contact CR-warped product submanifolds in
Kenmotsu space forms}, J. Korean Math. Soc., {\bf 42} (2005), 1101-1110.
\bibitem{UDD2}
A. Mustafa, A. De and S. Uddin, \emph{Characterization of warped product submanifolds in Kenmotsu manifolds}, Balkan J. of Geom. and its Appl.,
{\bf 20} (2015), 74--85.
\bibitem{UD1}
A. Mustafa, S. Uddin, V. A. Khan, and B. R. Wong, \emph{Contact CR-warped product submanifolds of nearly trans
Sasakian manifolds}, Taiwanese J. of Math., {17} (2013), 1473--1486.
\bibitem{PGS}
P. K. Pandey,  R. S. Gupta and A. Sharfuddin, \emph{B. Y. Chen's inequalities for bi-slant submanifolds in Kenmotsu space form}, Demonstratio
 Mathematica, {\bf 43} (2010), 887--898.
\bibitem{PARK}
K. S. Park, \emph{Pointwise slant and pointwise semi slant submanifolds almost contact metric manifold}, arXiv:1410.5587v2 [math.DG](2014).
\bibitem{TANNO}
S. Tanno, \emph{The automorphism groups of almost contact Riemannian manifolds}, Tohoku Math. J.,
{\bf 21} (1969), 21-38.
\bibitem{UDD1}
S. Uddin, \emph{Geometry of warped product semi-slant submanifolds of a Kenmotsu manifold}, Turk. J. Math.,
{\bf 36} (2012), 319--330.
\bibitem{UKK}
 S. Uddin, V. A. Khan and  K. A. Khan, \emph{Warped product submanifolds of Kenmotsu manifolds}, Turk J. Math., {\bf36}(2012), 319-330.
\bibitem{UOO}
S. Uddin, W. A. M. Othman, C. Ozel and A. Ali, \emph{A characterization and an improved inequality for
Warped product submanifolds in Kenmotsu manifold}, arXiv:1404.7258v2, [math.DG](2017).
\end{thebibliography}
\end{document}